\theoremstyle{plain}
\newtheorem{theorem}{\indent\sc Theorem}[section]
\newtheorem{lemma}[theorem]{\indent\sc Lemma}
\newtheorem{proposition}[theorem]{\indent\sc Proposition}
\theoremstyle{definition}
\newtheorem{definition}[theorem]{\indent\sc Definition}
\title{Closure of dilates of shift-invariant subspaces}
\author{Mois\'es Soto-Bajo\\
\bigskip
{\small moises.soto@uam.es}\\
{\small Department of Mathematics, Faculty of Sciences,}\\
{\small Autonomous University of Madrid, Cantoblanco,}\\
{\small 28049-Madrid, Spain}
}
\date{}
\begin{document}

\maketitle

\begin{center}
\line(1,0){450}
\end{center}
\textbf{Abstract:} Let $V$ be any shift-invariant subspace of square summable functions. 
We prove that if for some $A$ expansive dilation $V$ is $A$-refinable, 
then the completeness property is equivalent to several conditions on the local behaviour at the origin of the spectral function of $V$, 
among them the origin is a point of $A^*$-approximate continuity of the spectral function if we assume this value to be one. 
We present our results also in the more general setting of $A$-reducing spaces. 
We also prove that the origin is a point of $A^*$-approximate continuity of the Fourier transform of any semiorthogonal tight frame wavelet if we assume this value to be zero.

\bigskip
{\em Keywords: Multiresolution analysis, Generalized multiresolution analysis, Spectral function, Fourier transform, Approximate continuity}

{\em MSC (2010): 42C15, 42C30, 42C40}

\begin{center}
\line(1,0){450}
\end{center}
\section{Introduction}

A closed subspace $V$ of $L^2(\mathbb{R}^{d})$ is called shift-invariant 
if for all $f\in V$ and $k\in\mathbb{Z}^d$ it follows that $f(\cdot-k)\in V$. 
For the theory of shift-invariant subspaces see \cite{dBDVR1}, \cite{Bow2}, \cite{BR}, \cite{GuHa:05} and the references therein. 
This kind of subspaces have been used extensively in many areas of contemporary mathematics, as Approximation Theory, Finite Element Analysis or Wavelet Theory.

In addition to translations we consider also dilations. 
We say that a linear map $A:\mathbb{R}^d\rightarrow\mathbb{R}^d$ is an expansive dilation if $A(\mathbb{Z}^d)\subseteq\mathbb{Z}^d$ and the modulus of all (complex) eigenvalues is strictly greater than $1$. 
Thus the associated dilation operator is defined by
\begin{displaymath}
\mathcal{D}_{A}f(x)=d_A^{\frac{1}{2}}\,f(A\,x)
\mbox{ for }f\in L^2(\mathbb{R}^{d})\,,
\end{displaymath}
where $d_A=|det(A)|$. The adjoint of $A$ is denoted by $A^*$.

Let $A$ denote henceforth an expansive dilation. 
A shift-invariant subspace $V$ is called $A$-refinable if $V\subseteq\mathcal{D}_{A}V$. 
Note that from the condition $A(\mathbb{Z}^d)\subseteq\mathbb{Z}^d$ it follows that $\mathcal{D}_{A}V$ is also shift-invariant.

Wavelets were introduced relatively recently, in the beginning of the 80's and found applications in diverse disciplines. 
One of the main tools for constructing wavelets are the so called MRAs, introduced by Mallat and Meyer (\cite{Ma:89}, \cite{Me:90}). 
Extending this concept, in \cite{BMM1} Baggett, Medina, and Merrill introduced the generalized multiresolution analyses. 
A shift-invariant subspace $V$ is said to generate an $A$-generalized multiresolution analysis ($A$-GMRA) in $L^2(\mathbb{R}^{d})$ 
if it is $A$-refinable, $\bigcap_{j\in\mathbb{Z}}\mathcal{D}_{A}^{j}V=\{0\}$ and $\overline{\bigcup_{j\in\mathbb{Z}}\mathcal{D}_{A}^{j}V}=L^2(\mathbb{R}^{d})$. 
The subspace $V$ is called the core subspace of the $A$-GMRA.

The study of $A$-refinable shift-invariant subspaces and their approximation properties has attracted the interest of a great number of researchers in many areas at least in the last thirty years. 
The cornerstone in this study is the so called completeness property:
\begin{displaymath}
\overline{\bigcup_{j\in\mathbb{Z}}\mathcal{D}_{A}^{j}V}=L^2(\mathbb{R}^{d})\,.
\end{displaymath}
This allow us to approximate the functions in $L^2(\mathbb{R}^{d})$ by elements in the dilates of the shift-invariant subspace $V$. 
So many pages have been devoted to understand the nature of this property, and eventually to characterize the subspaces which satisfy it.

In \cite{KSa} can be found a brief review on the characterization of (frame) scaling functions, 
which particularly include results on the completeness of singly-generated $A$-refinable shift-invariant subspaces 
(we refer to \cite{Mad}, \cite{dBDVR3}, \cite{HWW}, \cite{LMS}, \cite{DGH2}, \cite{KKL4}, \cite{CKSa1}, \cite{CKSa2}, \cite{LiLi}, \cite{Cu}, \cite{KSa}, \cite{ZL}). 
See also \cite{JS1} and \cite{Ca2} for finitely generated subspaces. 
In \cite{Dut:05} a characterization of completeness in the general case was established. 
Before presenting that we introduce some definitions.

Given a shift-invariant subspace $V$ we say that $\mathcal{G}=\{\phi^{\alpha}\}_{\alpha\in I}\subseteq V$ ($I$ a set of indices) 
is a tight frame generator of $V$ if
\begin{displaymath}
\sum_{\alpha\in I}\sum_{k\in\mathbb{Z}}|<f,\phi^{\alpha}(\cdot-k)>|^2=\left\|\,f\,\right\|^{2}
\quad\forall\:f\in V\,,
\end{displaymath}
where $<>$ and $\left\|\,\cdot\,\right\|$ denote respectively the usual inner product and norm in $L^2(\mathbb{R}^{d})$. 
It is well known that every shift-invariant subspace has a tight frame generator (see \cite{Bow2}).

In this paper we adopt the convention that the Fourier transform of a function $f\in L^1(\mathbb{R}^d)\cap L^2(\mathbb{R}^{d})$ is defined by
\begin{displaymath}
\widehat{f}(\xi)=\int_{\mathbb{R}^d}f(x)\,e^{-2\pi ix\cdot\xi}\,dx\,.
\end{displaymath}

The characterization of completeness given in \cite{Dut:05} is the following:
\begin{theorem} \label{thm:characterization Dutkay}
Let $V$ be an $A$-refinable shift-invariant subspace and $\mathcal{G}:=\{\phi^{\alpha}\}_{\alpha\in I}$ a tight frame generator of $V$. 
Then the following conditions are equivalent:
\begin{itemize}
	\item[-] $\overline{\bigcup_{j\in\mathbb{Z}}\mathcal{D}_{A}^{j}V}=L^2(\mathbb{R}^{d})$;
	\item[-] one has
\begin{displaymath}
\lim_{j\to\infty}\sum_{\alpha\in I}|\widehat{\phi^{\alpha}}(A^{*-j}\xi)|^2=1
\quad a.e.\,\xi\in\mathbb{R}^d\,.
\end{displaymath}
\end{itemize}
\end{theorem}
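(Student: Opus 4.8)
The plan is to work on the Fourier transform side, using the standard dictionary that identifies a shift-invariant subspace $V$ with its range function and exploits the fact that $A$-refinability translates into a containment between the range function of $V$ at $\xi$ and the (pulled-back, scaled) range function at $A^*\xi$. First I would introduce the \emph{spectral function} $\sigma_V$ of $V$, defined (following Bownik--Rzeszotnik) by $\sigma_V(\xi)=\sum_{\alpha\in I}\sum_{k\in\mathbb{Z}^d}|\widehat{\phi^\alpha}(\xi+k)|^2$ for any tight frame generator $\mathcal{G}=\{\phi^\alpha\}$, and recall its basic properties: it is independent of the chosen generator, $0\le\sigma_V\le 1$, it is $\mathbb{Z}^d$-periodic, $\int_{[0,1)^d}\sigma_V=\dim V$ in the appropriate sense, and it behaves additively under orthogonal sums and is multiplicative-like under dilation, namely $\sigma_{\mathcal{D}_A V}(\xi)=\sigma_V(A^{*-1}\xi)$. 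I would then restate Theorem~\ref{thm:characterization Dutkay} in spectral-function language, observing that $\sum_{\alpha}|\widehat{\phi^\alpha}(A^{*-j}\xi)|^2$ is, after the Gramian/periodization manipulations, controlled by $\sigma_V(A^{*-j}\xi)$, so that completeness is equivalent to $\lim_{j\to\infty}\sigma_V(A^{*-j}\xi)=1$ for a.e.\ $\xi$.

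The core of the argument is a measure-theoretic equivalence between this pointwise dilation limit and $A^*$-approximate continuity of $\sigma_V$ at the origin with value $1$. I would set $\delta=\delta_{A^*}$ to be the dilation factor $d_A$ and use that the family of sets $\{A^{*-j}([-1/2,1/2)^d)\}_{j\ge 0}$ forms a regular nested neighbourhood basis of $0$ shrinking at a controlled rate, with $|A^{*-j}Q|=\delta^{-j}|Q|$; because $\sigma_V$ is $\mathbb{Z}^d$-periodic and bounded, controlling averages over these dilated cubes is equivalent to controlling $\int_{A^{*-j}Q}\sigma_V$. The implication from approximate continuity to the limit is the easy direction: if $0$ is a point of $A^*$-approximate density-one set on which $\sigma_V\to 1$, then along the dilation sequence the cubes $A^{*-j}Q$ eventually concentrate on that set, forcing $\sigma_V(A^{*-j}\xi)\to 1$ for a.e.\ $\xi$ by a Lebesgue-density / Fubini argument applied to the ``bad'' set where convergence fails. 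For the converse, I would argue by contradiction: if $0$ were not a point of $A^*$-approximate continuity with value $1$, there would be $\varepsilon>0$ and a sequence of scales along which the set $\{\sigma_V<1-\varepsilon\}$ has non-negligible relative measure in $A^{*-j}Q$; periodicity lets me ``unfold'' this to a non-null set of $\xi$ for which $\sigma_V(A^{*-j}\xi)<1-\varepsilon$ infinitely often, contradicting the a.e.\ limit. Packaging these, I would also record the intermediate equivalent conditions (e.g.\ $\operatorname*{ess\,lim\,inf}_{\xi\to 0}\sigma_V(\xi)=1$, or vanishing of the density of $\{\sigma_V\le 1-\varepsilon\}$ at $0$ for every $\varepsilon>0$, or the $A^*$-approximate-continuity formulation) as a chain of implications; several of these are formally immediate once the dictionary is in place.

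For the reducing-space generalization I would replace ``$=1$ a.e.'' by ``$=\mathbf{1}_{S}$ a.e.'' where $S$ is the $A^*$-invariant set defining the reducing space, redo the spectral-function bookkeeping relative to $S$ (so that $\sigma_V\le\mathbf{1}_S$ and the target limit becomes $\mathbf{1}_S$), and observe that the dilation sequence $A^{*-j}\xi$ stays inside $S$ for a.e.\ $\xi\in S$ by invariance, so the same density argument localizes. Finally, for the semiorthogonal tight frame wavelet statement, I would use that such a wavelet $\psi$ generates, via $V=\overline{\operatorname{span}}\{\mathcal{D}_A^{-j}\psi(\cdot-k):j\ge 1,k\in\mathbb{Z}^d\}$, an $A$-refinable shift-invariant subspace whose dilates are complete in $L^2(\mathbb{R}^d)$ (this is precisely where semiorthogonality is used: the wavelet spaces $W_j$ are mutually orthogonal and sum to $L^2$), so $\sigma_V$ satisfies the hypothesis of the first theorem; one then relates $\sigma_V$ near $0$ to $|\widehat{\psi}|^2$ near $0$ via the refinement/scaling identity $|\widehat{\psi}(\xi)|^2 = \sigma_V(A^{*-1}\xi)-\sigma_V(\xi)$ (up to the standard periodization subtleties), from which $A^*$-approximate continuity of $\sigma_V$ at $0$ with value $1$ forces $A^*$-approximate continuity of $|\widehat{\psi}|$, hence of $\widehat{\psi}$, at $0$ with value $0$.

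The step I expect to be the main obstacle is the rigorous equivalence between the pointwise dilation limit $\sigma_V(A^{*-j}\xi)\to 1$ a.e.\ and $A^*$-approximate continuity at the origin: one must pass carefully between statements about the scalar sequence $\{\sigma_V(A^{*-j}\xi)\}_j$ holding for a.e.\ $\xi$ and statements about relative densities of sublevel sets in the shrinking cubes $A^{*-j}Q$, handling the interaction of the (possibly non-diagonalizable, non-self-adjoint) linear map $A^*$ with Lebesgue density — in particular ensuring that the dilated cubes are ``regular'' enough for a Lebesgue-differentiation-type statement, and that no measure leaks due to the eccentricity of $A^{*-j}Q$ as $j\to\infty$. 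Using $\mathbb{Z}^d$-periodicity to reduce everything to the unit cube and the fact that $A^*$ maps $\mathbb{Z}^d$ into itself should tame this, but it is the place where the argument needs genuine care rather than bookkeeping.
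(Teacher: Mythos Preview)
Your overall strategy of routing the equivalence through $A^*$-approximate continuity of the spectral function at the origin parallels the paper's (Theorems~3.3--3.5 together with Lemmas~2.4--2.5). But your definition of $\sigma_V$ is wrong: the spectral function is $\sigma_V(\xi)=\sum_{\alpha}|\widehat{\phi^\alpha}(\xi)|^2$, with \emph{no} sum over $k\in\mathbb{Z}^d$, and it is \emph{not} $\mathbb{Z}^d$-periodic. What you wrote down is the dimension function (the periodization of $\sigma_V$), which is periodic but takes nonnegative-integer values and is not the object in the theorem. In particular $\sum_\alpha|\widehat{\phi^\alpha}(A^{*-j}\xi)|^2$ is literally $\sigma_V(A^{*-j}\xi)$; no ``Gramian/periodization manipulation'' is needed. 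The parts of your sketch that rely on periodicity (``reduce everything to the unit cube'', ``periodicity lets me unfold this to a non-null set of $\xi$'') therefore collapse, and your diagnosis of the main obstacle (eccentricity of $A^{*-j}Q$ versus Lebesgue differentiation) is beside the point: the paper works entirely with the discrete dilation sequence and the tailored notion of $A$-density, never invoking a differentiation theorem.

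Beyond the definitional error, two ingredients that carry the actual weight are missing. First, you never link completeness itself to any spectral statement; the paper does this via the tight-frame computation $\|P_jf\|^2=d_A^{\,j}\int_{A^{*-j}E}\sigma_V$ for $\widehat f=\chi_E$ (Theorem~3.3), so that $\|P_jf\|\to\|f\|$ forces the averages of $\sigma_V$ over $A^{*-j}E$ to tend to $1$, whence approximate continuity by a Chebyshev-type argument (Theorem~3.4). Second, you omit the monotonicity $\sigma_V(A^{*-1}\xi)\ge\sigma_V(\xi)$ a.e., which comes from $A$-refinability via $\sigma_{\mathcal{D}_AV}(\xi)=\sigma_V(A^{*-1}\xi)$ and $V\subseteq\mathcal{D}_AV$. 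This is precisely what makes your ``easy direction'' work: approximate continuity only yields a \emph{subsequence} $j_k$ with $\sigma_V(A^{*-j_k}\xi)\to1$ a.e.\ (Lemma~2.5), and it is monotonicity that upgrades this to the full sequence in Theorem~3.5. Your density/Fubini sketch does not accomplish this step.
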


We make some comments which arise from this result. 
Fixed a shift-invariant subspace $V$ and a tight frame generator $\mathcal{G}=\{\phi^{\alpha}\}_{\alpha\in I}$ of $V$, the expression
\begin{displaymath}
\sigma_V(\xi)=\sum_{\alpha\in I}|\widehat{\phi^{\alpha}}(\xi)|^2
\end{displaymath}
is called the spectral function of $V$, 
and defines a locally integrable function on $\mathbb{R}^d$ which does not depend on the chosen $\mathcal{G}$, only on $V$. 
It was introduced by Rzeszotnik in \cite{Rz:01} (see also \cite{WW:00}, \cite{GuHa:05}).

From all these characterizations we cited above one can extract the following conclusion: 
the completeness property depends on the behaviour at the origin of the spectral function of $V$.

In order to study the local behaviour of a function we introduce some concepts from \cite{CKSa1} 
which extend the classical notions of the density point of a measurable set and the point of approximate continuity of a measurable function (see \cite{N:60}, \cite{CKSa2}). 
$B_r$ denotes the open ball in $\mathbb{R}^d$ of radius $r>0$ around the origin.
\begin{definition} \label{defi:point of A-density}
Let $E\subseteq\mathbb{R}^d$ be a set of positive measure. 
We say that the origin is a point of $A$-density for $E$ if for all $r>0$
\begin{displaymath}
\lim_{j\to\infty}\frac{|E\cap A^{-j}B_r|}{|A^{-j}B_r|}=1\,.
\end{displaymath}
\end{definition}
\begin{definition} \label{defi:point of A-approximate continuity}
Let $f\,:\,\mathbb{R}^d\longrightarrow\mathbb{C}$ be a measurable function. 
We say that the origin is a point of $A$-approximate continuity of $f$ 
if there exists $E\subseteq\mathbb{R}^d$ of positive measure such that the origin is a point of $A$-density for $E$ and
\begin{displaymath}
\lim_{\begin{subarray}{c} x\to 0 \\ x\in E \end{subarray}}f(x)=f(0)\,.
\end{displaymath}
We also say that $f$ is $A$-locally nonzero at the origin if
\begin{displaymath}
\lim_{j\to\infty}\frac{|\{x\in A^{-j}B_1\::\:f(x)=0\}|}{|A^{-j}B_1|}=0\,.
\end{displaymath}
That is, if the origin is a point of $A$-density for $Supp(f):=\{x\in\mathbb{R}^d\::\:f(x)\neq 0\}$.
\end{definition}

Following \cite{LMS}, we make the following:
\begin{definition} \label{defi:A-absorbing}
We say that $E\subseteq\mathbb{R}^d$ measurable is $A$-absorbing in $\mathbb{R}^d$ 
if for almost every $\xi\in\mathbb{R}^d$ there exists a positive integer $j_0$ (possibly dependent on $\xi$) such that $A^{j}\xi\in E$ if $j\geq j_0$.
\end{definition}
This notion was used in \cite{LMS} for characterizing scaling functions in the dyadical case on the real line.

We summarize the known characterizations of scaling functions in $L^2(\mathbb{R}^{d})$ in the following way: 
let $\varphi\in L^2(\mathbb{R}^{d})$ and consider the corresponding principal shift-invariant subspace $V$ generated by the shifts of $\varphi$. 
Define $\phi\in L^2(\mathbb{R}^{d})$ given by
\begin{displaymath}
\widehat{\phi}(\xi):=\left\{\begin{array}{lc}
\frac{\widehat{\varphi}(\xi)}{[\widehat{\varphi},\widehat{\varphi}]^{\frac{1}{2}}(\xi)} & \mbox{if }[\widehat{\varphi},\widehat{\varphi}](\xi)\neq 0 \\ 0 & elsewhere \end{array}\right.\,,
\end{displaymath}
where $[\widehat{\varphi},\widehat{\varphi}](\xi):=\sum_{k\in\mathbb{Z}^d}|\widehat{\varphi}(\xi+k)|^2$. 
In \cite{dBDVR1} was proved that $\phi$ is a tight frame generator of $V$. 
Thus, note that the spectral function of $V$ is $\sigma_V(\xi)=|\widehat{\phi}(\xi)|^2$. 
Suppose that $V$ is $A$-refinable. 
This was shown in \cite{KKL4} and \cite{CKSa2} to be equivalent to the existence of some $m\in L^{\infty}(\mathbb{T}^d)$, called low-pass filter, such that
\begin{equation} \label{eq:scaling equation}
\widehat{\phi}(A^*\xi)=m(\xi)\,\widehat{\phi}(\xi)
\quad a.e.\,\xi\in\mathbb{R}^d\,.
\end{equation}
\begin{theorem}
The following conditions are equivalent:
\begin{itemize}
	\item[-] $\overline{\bigcup_{j\in\mathbb{Z}}\mathcal{D}_A^{j}V}=L^2(\mathbb{R}^{d})\,$;
	\item[-] one has $\bigcup_{j\in\mathbb{Z}}A^{*j}(Supp(\widehat{\varphi}))=\mathbb{R}^d$, 
	or equivalently $\bigcup_{j\in\mathbb{Z}}A^{*j}(Supp(\widehat{\phi}))=\mathbb{R}^d\,$;
	\item[-] for any bounded set $E$ of positive measure
	\begin{displaymath}
	\lim_{j\to\infty}\frac{1}{|A^{*-j}E|}\,\int_{A^{*-j}E}|\widehat{\phi}(\xi)|^2\,d\xi=1\,;
	\end{displaymath}
	\item[-] the set $Supp(\widehat{\phi})$ is $A^{*-1}$-absorbing in $\mathbb{R}^d$;
	\item[-] $\lim_{j\to\infty}|\widehat{\phi}(A^{*-j}\xi)|>0$ for almost every $\xi\in\mathbb{R}^d$;
	\item[-] $\lim_{j\to\infty}|\widehat{\phi}(A^{*-j}\xi)|=1$ for almost every $\xi\in\mathbb{R}^d$;
	\item[-] $|\widehat{\phi}|$ is $A^*$-locally nonzero at the origin;
	\item[-] the origin is a point of $A^*$-approximate continuity of $|\widehat{\phi}(\xi)|$ if we put $|\widehat{\phi}(0)|=1$.
\end{itemize}
\end{theorem}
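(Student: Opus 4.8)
The plan is to prove the equivalences by establishing a cycle, using Theorem~\ref{thm:characterization Dutkay} as the hinge. Since $\phi$ is a tight frame generator of $V$, Dutkay's theorem tells us that completeness is equivalent to $\lim_{j\to\infty}|\widehat{\phi}(A^{*-j}\xi)|^2=1$ a.e., which is the sixth listed condition; so the core of the work is to link the remaining items to this one. The natural order is: first show completeness $\Leftrightarrow$ the second condition (the union of $A^*$-dilates of the supports covers $\mathbb{R}^d$); then show this is equivalent to the fourth condition ($Supp(\widehat{\phi})$ is $A^{*-1}$-absorbing), which is essentially a reformulation: $\bigcup_j A^{*j}(Supp(\widehat{\phi}))=\mathbb{R}^d$ up to null sets says exactly that a.e.\ $\xi$ lands in $Supp(\widehat{\phi})$ after finitely many forward $A^{*}$-iterations, and $A^{*-1}$-absorbing is the same statement phrased with $A^{*-1}$ as the map pushing points toward the origin. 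The equivalence $Supp(\widehat{\varphi}) \leftrightarrow Supp(\widehat{\phi})$ (up to null sets) is immediate from the definition of $\widehat{\phi}$, since $\widehat{\phi}(\xi)\neq 0$ iff $[\widehat{\varphi},\widehat{\varphi}](\xi)\neq 0$, and the latter fails a.e.\ only where $\widehat{\varphi}$ and all its $\mathbb{Z}^d$-translates vanish.

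Next I would handle the scaling-relation-driven dichotomy, which is the heart of the matter. From the scaling equation $\widehat{\phi}(A^*\xi)=m(\xi)\widehat{\phi}(\xi)$ with $m\in L^\infty(\mathbb{T}^d)$, and the fact that $\sigma_V\le 1$ pointwise a.e.\ (a general property of spectral functions, since $\sum_k|\widehat{\phi}(\xi+k)|^2 = [\widehat{\phi},\widehat{\phi}](\xi)\le 1$ forces $|\widehat{\phi}(\xi)|\le 1$), one shows that along the orbit $\xi, A^{*-1}\xi, A^{*-2}\xi, \dots$ the quantities $|\widehat{\phi}(A^{*-j}\xi)|$ are \emph{monotone nondecreasing in $j$} for a.e.\ $\xi$: indeed $|\widehat{\phi}(A^{*-(j+1)}\xi)| \ge |\widehat{\phi}(A^{*-j}\xi)|$ because writing $\eta = A^{*-(j+1)}\xi$ we have $|\widehat{\phi}(A^*\eta)| = |m(\eta)|\,|\widehat{\phi}(\eta)| \le |\widehat{\phi}(\eta)|$ whenever $|m(\eta)|\le 1$, and one checks $|m|\le 1$ a.e.\ on $Supp(\widehat{\phi})$ from $[\widehat{\phi},\widehat{\phi}]\le 1$ combined with periodicity of $m$ and the quasi-periodicity of $\widehat{\phi}$. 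Hence the limit $L(\xi):=\lim_{j\to\infty}|\widehat{\phi}(A^{*-j}\xi)|$ exists in $[0,1]$ for a.e.\ $\xi$, and it is $A^*$-dilation invariant: $L(A^*\xi)=L(\xi)$. This gives immediately the equivalence of the fifth and sixth conditions: a dilation-invariant function on $\mathbb{R}^d$ that is positive a.e.\ must in fact be $1$ a.e.\ --- here one invokes a zero-one law for $A^*$-dilation-invariant (equivalently $A^{*-1}$-invariant) measurable functions, using that $\bigcap_j A^{*-j}$ of a full-measure set behaves ergodically under the expansive map; concretely, $\{L > c\}$ is $A^*$-invariant and, being an $A^{*-1}$-absorbing-type set once nonnull, must be all of $\mathbb{R}^d$ up to null sets, and letting $c\uparrow$ shows $L\ge 1$, hence $L=1$.

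Then I would close the loop to the three density/approximate-continuity conditions. First, monotone convergence and the definition of $L$ give that completeness ($L=1$ a.e.) is equivalent to the averaged statement $\frac{1}{|A^{*-j}E|}\int_{A^{*-j}E}|\widehat{\phi}|^2\to 1$ for bounded $E$ of positive measure: the $\le 1$ bound on $\sigma_V$ lets one pass from pointwise a.e.\ convergence of $|\widehat{\phi}(A^{*-j}\xi)|^2$ to convergence of the normalized integrals by dominated convergence after the change of variables $\xi\mapsto A^{*-j}\xi$, and conversely the averages converging to the maximal value $1$ forces the integrand to $1$ in measure on each shrinking set, which with monotonicity upgrades to a.e.\ convergence. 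Taking $E=B_1$ and the complement of $Supp(\widehat{\phi})$, this average condition says precisely $|\widehat{\phi}|$ is $A^*$-locally nonzero at the origin (seventh condition); and once $|\widehat{\phi}|$ is $A^*$-locally nonzero and the averages of $|\widehat{\phi}|^2$ over $A^{*-j}B_r$ tend to $1 = |\widehat{\phi}(0)|^2$, Definition~\ref{defi:point of A-approximate continuity} is satisfied by taking $E$ to be a suitable superlevel set of $|\widehat{\phi}|$ (say $\{|\widehat{\phi}| > 1-\varepsilon\}$, whose $A^*$-density at the origin follows from the average condition via Chebyshev), giving the eighth condition; the converse implication (eighth $\Rightarrow$ completeness) is direct from the definition of $A^*$-density once the value at $0$ is declared to be $1$.

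\medskip
\textbf{Main obstacle.} The crux is the zero-one law used in the fifth$\Leftrightarrow$sixth step: showing that an $A^*$-dilation-invariant measurable function which is a.e.\ positive must equal its essential supremum ($=1$) a.e. This requires exploiting the expansiveness of $A^*$ (so that $A^{*-j}B_r$ exhausts a neighborhood basis of $0$ and, dually, orbits spread out) to rule out nontrivial invariant sets --- essentially an ergodicity statement for the linear $\mathbb{Z}$-action generated by $A^*$ on $(\mathbb{R}^d,\,dx)$ restricted to the behavior near $0$. One must be careful that $A^*$ need not be diagonalizable and its eigenvalues need not be real, so the argument should be phrased in terms of the Jordan/real-Jordan structure or, more cleanly, purely in terms of the measure-theoretic properties of $A^{*-j}B_r$ (nestedness up to controlled distortion, vanishing diameter) rather than explicit spectral coordinates. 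The monotonicity observation coming from the scaling equation and $\sigma_V\le 1$ does most of the heavy lifting and makes the remaining density arguments soft.
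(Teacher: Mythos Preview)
Your zero-one law --- that an $A^*$-dilation-invariant measurable function which is positive a.e.\ must be constant equal to its essential supremum --- is false as stated, and this breaks the direct bridge you propose between the fifth and sixth conditions. The linear $\mathbb{Z}$-action generated by $A^*$ on $(\mathbb{R}^d,dx)$ is not ergodic: a half-line (in $d=1$ with $A=2$), or more generally any homogeneous cone built from $A^*$-invariant subspaces, is $A^*$-invariant with positive but not full measure; abstractly, any measurable function on the quotient $(\mathbb{R}^d\setminus\{0\})/\langle A^*\rangle$ pulls back to a nonconstant $A^*$-invariant function. So ``$\{L>c\}$ is $A^*$-invariant'' does not force it to be null or conull. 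What is actually true --- and what the paper exploits --- is that $L$ is not an arbitrary dilation-invariant function: since $\sigma_V(A^{*-j}\xi)=\sigma_{\mathcal D_A^jV}(\xi)$ and $\mathcal D_A^jV\nearrow\overline{\bigcup_j\mathcal D_A^jV}=H^2_{\widetilde G}$ for some $A^*$-invariant $\widetilde G$, one has $L=\sigma_{H^2_{\widetilde G}}=\chi_{\widetilde G}$, a characteristic function. Once $L$ is $\{0,1\}$-valued, $L>0$ a.e.\ $\Leftrightarrow L=1$ a.e.\ is trivial. But establishing $\overline{\bigcup_j\mathcal D_A^jV}=H^2_{\widetilde G}$ with $\widetilde G=\bigcup_j A^{*j}\mathrm{Supp}(\sigma_V)$ is precisely the content of the paper's Theorem~\ref{thm:characterization one}, which you list as a step (``first show completeness $\Leftrightarrow$ the second condition'') but never argue; the nontrivial inclusion uses that each $P_jf$ has Fourier support in $\widetilde G$ while $P_jf\to f$. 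Without this, your chain second $\Rightarrow$ fourth $\Rightarrow$ fifth dead-ends, because fifth $\Rightarrow$ sixth is exactly where you invoke the faulty ergodicity.

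A second, related gap is the implication from the seventh condition (locally nonzero) back to completeness. Your sketch only gives the forward direction (averages $\to 1$ forces the zero set to have vanishing density); you never close the loop on seventh alone. The paper (Theorem~\ref{thm:characterization four}, (b)$\Rightarrow$(a)) does this with an argument you do not have: if $g\perp\overline{\bigcup_j\mathcal D_A^jV}$ then $\sigma_V(\xi)\,\widehat g(A^{*j}\xi)=0$ a.e.\ for all $j\ge 0$, so the set $\{\widehat g\ne 0\}$, rescaled by $A^{*-j}$, is contained in $\{\sigma_V=0\}$, which has vanishing $A^*$-density at the origin by hypothesis; hence $\widehat g=0$ a.e.\ and $g=0$. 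The remaining pieces of your outline --- monotonicity along orbits from the scaling equation and $|m|\le 1$, the averaged condition via change of variables and dominated convergence, and the Chebyshev-type passage from averages to approximate continuity --- are correct and match the paper's Theorems~\ref{thm:characterization three} and~\ref{thm:characterization four}. In short: your diagnosis of the main obstacle is right, but the cure (ergodicity of the dilation action) is wrong; the correct mechanism is that $L$ is the spectral function of an $A$-reducing space and hence a characteristic function.
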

For a proof of the different equivalences in the above result we refer to 
\cite{Mad}, \cite{dBDVR3}, \cite{HWW}, \cite{LMS}, \cite{DGH2}, \cite{KKL4}, \cite{CKSa1}, \cite{CKSa2}, \cite{LiLi}, \cite{Cu}, \cite{KSa}, \cite{ZL}.

In this paper we prove the following result about the characterization of completeness property, 
which includes Theorem \ref{thm:characterization Dutkay} and generalizes all the above-cited results.
\begin{theorem} \label{thm:characterizations for L2}
Let $V$ be an $A$-refinable shift-invariant subspace. 
Then the following conditions are equivalent:
\begin{itemize}
	\item[-] $\overline{\bigcup_{j\in\mathbb{Z}}\mathcal{D}_A^{j}V}=L^2(\mathbb{R}^{d})\,$;
	\item[-] if $\mathcal{G}$ generates $V$ in the sense that the linear span of the shifts of $\mathcal{G}$ is dense in $V$, 
then $\bigcup_{\phi\in\mathcal{G}}\bigcup_{j\in\mathbb{Z}}A^{*j}(Supp(\widehat{\phi}))=\mathbb{R}^d\,$;
	\item[-] for any bounded set $E$ of positive measure
	\begin{displaymath}
	\lim_{j\to\infty}\frac{1}{|A^{*-j}E|}\,\int_{A^{*-j}E}\sigma_V(\xi)\,d\xi=1\,;
	\end{displaymath}
	\item[-] the set $Supp(\sigma_V)$ is $A^{*-1}$-absorbing in $\mathbb{R}^d$;
	\item[-] $\lim_{j\to\infty}\sigma_V(A^{*-j}\xi)>0$ for almost every $\xi\in\mathbb{R}^d$;
	\item[-] $\lim_{j\to\infty}\sigma_V(A^{*-j}\xi)=1$ for almost every $\xi\in\mathbb{R}^d$;
	\item[-] $\sigma_V$ is $A^*$-locally nonzero at the origin;
	\item[-] the origin is a point of $A^*$-approximate continuity of $\sigma_V$ if we put $\sigma_V(0)=1$.
\end{itemize}
\end{theorem}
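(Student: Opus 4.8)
\noindent\emph{Proof plan.}
The plan is to put everything on the range function (fiberization) picture of shift-invariant spaces, see \cite{Bow2}, \cite{Rz:01}, and then attach each condition to the sixth one. Write $J_V(\xi)\subseteq\ell^2(\mathbb{Z}^d)$ for the fiber space of $V$ over $\xi$ and $P_{J_V(\xi)}$ for the orthogonal projection onto it. Recall that $\sigma_V(\xi)=\|P_{J_V(\xi)}e_0\|^2$, so $0\le\sigma_V\le 1$ a.e.; that $J_V(\xi)\subseteq J_W(\xi)$ a.e.\ when $V\subseteq W$, so $\sigma_V\le\sigma_W$; that $\sigma_{V\oplus W}=\sigma_V+\sigma_W$ for orthogonal summands, so, since $\sigma_{L^2(\mathbb{R}^d)}\equiv1$, one has $\sigma_V=1$ a.e.\ iff $V=L^2(\mathbb{R}^d)$; and that $Supp(\sigma_V)$ coincides, up to a null set, with $\bigcup_{\phi\in\mathcal{G}}Supp(\widehat{\phi})$ for any family $\mathcal{G}$ whose shifts span a dense subspace of $V$. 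I would also record the fiber version of dilation: since $V$ is $A$-refinable, $V\subseteq\mathcal{D}_A^{n}V$ for all $n\geq1$, and $J_{\mathcal{D}_A^{n}V}(\xi)=\bigoplus_{\mathbf{i}}J_V(A^{*-n}\xi+\eta_{\mathbf{i}})$, where $\{\eta_{\mathbf{i}}\}$ is a set of coset representatives of $\mathbb{Z}^d/A^{*n}\mathbb{Z}^d$ (with $\eta_{\mathbf{1}}=0$) and $\ell^2(\mathbb{Z}^d)$ is decomposed accordingly, $e_0$ lying in the summand indexed by the coset $A^{*n}\mathbb{Z}^d$. Applying $\|P_{\,\cdot\,}e_0\|^2$ to this for $n=1$ gives $\sigma_{\mathcal{D}_A V}(\xi)=\sigma_V(A^{*-1}\xi)$, and together with monotonicity under inclusion this yields $\sigma_V(A^{*-j}\xi)\leq\sigma_V(A^{*-(j+1)}\xi)$ for a.e.\ $\xi$; discarding a null set, $j\mapsto\sigma_V(A^{*-j}\xi)$ is non-decreasing and bounded by $1$, so $\sigma_\infty(\xi):=\lim_{j\to\infty}\sigma_V(A^{*-j}\xi)$ exists a.e.\ in $[0,1]$ and satisfies $\sigma_\infty\circ A^*=\sigma_\infty$ a.e.

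Granting this, the equivalence of the first and sixth conditions is Theorem~\ref{thm:characterization Dutkay}. For the third: the substitution $\xi=A^{*-j}\eta$ turns the average into $\tfrac1{|E|}\int_E\sigma_V(A^{*-j}\eta)\,d\eta$, which by monotone convergence tends to $\tfrac1{|E|}\int_E\sigma_\infty$; asking this to equal $1$ for every bounded $E$ of positive measure is, since $0\le\sigma_\infty\le1$, exactly $\sigma_\infty=1$ a.e. For the second, fourth and seventh: by monotonicity, $\sigma_V(A^{*-j}\xi)>0$ for some $j$ iff it holds for all large $j$ iff $\sigma_\infty(\xi)>0$; hence ``$Supp(\sigma_V)$ is $A^{*-1}$-absorbing'' and ``$\bigcup_{\phi,j}A^{*j}(Supp(\widehat{\phi}))=\bigcup_j A^{*j}Supp(\sigma_V)=\mathbb{R}^d$ up to a null set'' both say $\sigma_\infty>0$ a.e.; and, by the same substitution and continuity of measure, $\tfrac{|\{x\in A^{*-j}B_1:\sigma_V(x)=0\}|}{|A^{*-j}B_1|}=\tfrac{|\{\eta\in B_1:\sigma_V(A^{*-j}\eta)=0\}|}{|B_1|}\to\tfrac{|\{\eta\in B_1:\sigma_\infty(\eta)=0\}|}{|B_1|}$, so the seventh condition says $\sigma_\infty>0$ a.e.\ on $B_1$, equivalently, by $A^*$-invariance of $\sigma_\infty$, a.e.\ on $\mathbb{R}^d$. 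For the eighth: for a function bounded by $1$ whose value at the origin is declared to be $1$, $A^*$-approximate continuity at the origin in the sense of Definition~\ref{defi:point of A-approximate continuity} is equivalent to $\tfrac1{|A^{*-j}B_1|}\int_{A^{*-j}B_1}(1-\sigma_V)\to0$ — one direction by breaking the integral along the witnessing set $E$, the other by the standard construction of $E$ from the level sets $\{1-\sigma_V>1/m\}$, as in \cite{CKSa1}, \cite{CKSa2} — and this is the ball-case of the third condition, hence again $\sigma_\infty=1$ a.e.

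The only substantive step is that the fifth condition forces the sixth, i.e.\ $\sigma_\infty>0$ a.e.\ implies $\sigma_\infty=1$ a.e. Fix $\xi$ where $\sigma_\infty(\xi)>0$ and monotonicity holds, so $c:=\sigma_V(A^{*-j_0}\xi)>0$ for some $j_0$; set $\eta:=A^{*-j_0}\xi$ and $u:=P_{J_V(\eta)}e_0\in J_V(\eta)$, so $\|u\|^2=c$ and $\langle u,e_0\rangle=\|u\|^2=c$. For each $n\geq1$, refinability gives $J_V(\eta)\subseteq J_{\mathcal{D}_A^{n}V}(\eta)$; letting $\Pi_n$ be the projection of $\ell^2(\mathbb{Z}^d)$ onto the coordinates indexed by $A^{*n}\mathbb{Z}^d$, the vector $\Pi_n u$ — re-indexed by $A^{*n}m\mapsto m$ — lies in $J_V(A^{*-n}\eta)$, still with zeroth coordinate $c$ and norm $\|\Pi_n u\|$; hence $\sigma_V(A^{*-n}\eta)=\|P_{J_V(A^{*-n}\eta)}e_0\|^2\geq c^2/\|\Pi_n u\|^2$. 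Since the eigenvalues of $A^*$ have modulus $>1$, $\bigcap_{n\geq1}A^{*n}\mathbb{Z}^d=\{0\}$, so $\|\Pi_n u\|^2\downarrow|\langle u,e_0\rangle|^2=c^2$, whence $\sigma_\infty(\xi)=\lim_n\sigma_V(A^{*-n}\eta)\geq\lim_n c^2/\|\Pi_n u\|^2=1$. The reverse implication is trivial, so the chain of equivalences closes.

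The main obstacle I anticipate is bookkeeping rather than depth: getting the fiber identity $J_{\mathcal{D}_A^{n}V}(\xi)=\bigoplus_{\mathbf{i}}J_V(A^{*-n}\xi+\eta_{\mathbf{i}})$ right, together with the exact coset in which $e_0$ sits and on which $\Pi_n$ acts, since the argument for the fifth condition rests entirely on that identity and it is easy to mislabel the representatives. A minor point to treat carefully is reconciling Definition~\ref{defi:point of A-approximate continuity}, which ranges over $A^{*-j}B_r$ for every $r>0$, with the case $r=1$ used above; this needs only that $|A^{*-j}B_r|=r^{d}|A^{*-j}B_1|$ and that a ball is trapped between two dilates of $B_1$.
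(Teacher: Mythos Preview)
Your proposal is correct and follows a genuinely different route from the paper. The paper does not argue Theorem~\ref{thm:characterizations for L2} directly: it proves the more general $H^2_G$ version (Theorems~\ref{thm:characterization one}--\ref{thm:characterization five}) and then specializes $G=\mathbb{R}^d$. In particular, the implication you single out as the substantive step, that $\sigma_\infty>0$ a.e.\ forces $\sigma_\infty=1$ a.e., is obtained in the paper only circuitously: Theorem~\ref{thm:characterization two} shows $\sigma_\infty>0$ is equivalent to completeness via the support identity of Theorem~\ref{thm:characterization one}; Theorem~\ref{thm:characterization three} then computes $\|P_jf\|^2$ for $\widehat f=\chi_E$ using a tight frame generator to derive the averaging condition; Theorem~\ref{thm:characterization four} deduces approximate continuity from this by contradiction; and Theorem~\ref{thm:characterization five} uses Lemma~\ref{lemma:sequences and approximate continuity} to pass from approximate continuity to the pointwise limit. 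Your range-function argument---project $u=P_{J_V(\eta)}e_0$ onto the $A^{*n}\mathbb{Z}^d$ block inside $J_{\mathcal{D}_A^nV}(\eta)$ and exploit $\bigcap_nA^{*n}\mathbb{Z}^d=\{0\}$---is sharper: it gives the implication pointwise without ever touching tight frames or the approximate-continuity apparatus. Similarly, your treatment of the averaging condition via monotone convergence bypasses the paper's projection computation altogether. What the paper's route buys is that it carries over verbatim to $H^2_G$ and produces reusable lemmas for the wavelet section; what yours buys is a shorter, more structural argument in the $L^2$ case, at the cost of quoting Theorem~\ref{thm:characterization Dutkay} for the link to condition~(1) and of having to nail down the fiber identity. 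On that last point, your own caveat is warranted: with $\eta_{\mathbf i}$ chosen as representatives of $\mathbb{Z}^d/A^{*n}\mathbb{Z}^d$, the summands in the decomposition of $J_{\mathcal{D}_A^nV}(\xi)$ are $J_V(A^{*-n}\xi+A^{*-n}\eta_{\mathbf i})$, not $J_V(A^{*-n}\xi+\eta_{\mathbf i})$; this does not affect the argument (only the $\eta_{\mathbf 1}=0$ block matters, and it is correctly identified), but the formula as written should be adjusted.
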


The previous theorem provides us with a set of different conditions on the spectral function, 
all of them equivalent to the completeness property of an $A$-refinable shift-invariant subspace $V$. 
However, if $V$ does not satisfy the completeness property does not mean that it is useless for approximating functions. 
This more general situation has been considered in many papers 
(\cite{Mad}, \cite{DL}, \cite{LMS}, \cite{DLS2}, \cite{KL}, \cite{GuHa2}, \cite{DDG}, \cite{DDGH}, \cite{DDGH3}, \cite{LiLi}, \cite{KSa}, \cite{ZL}).

We will say that a shift-invariant subspace $H$ of $L^2(\mathbb{R}^{d})$ is an $A$-reducing space if $\mathcal{D}_{A}H=H$. 
It is easy to see that, if $V$ is an $A$-refinable shift-invariant subspace, 
then $\overline{\bigcup_{j\in\mathbb{Z}}\mathcal{D}_{A}^{j}V}$ is an $A$-reducing space. 
In this way, $A$-reducing spaces arise naturally in the study of the dilates of shift-invariant subspaces. 
Apart from $L^2(\mathbb{R}^{d})$, the first $A$-reducing space considered in this context was the Hardy space $H^2$ on the real line, with dyadic dilations 
(see \cite{HKLS}, \cite{Au2}, \cite{HWW}, \cite{HW:96}). 
It is known that the $A$-reducing subspaces of $L^2(\mathbb{R}^{d})$ have the following form
\begin{displaymath}
H^2_G=\{f\in L^2(\mathbb{R}^{d})\::\:Supp\widehat{f}\subseteq G\}
\end{displaymath}
for some $G\subseteq\mathbb{R}^d$, where $G$ is an $A^*$-invariant measurable set. 
Here $A^*$-invariant means that $A^*G=G$ (see \cite{DDGH}, \cite{KSa}).

Fixed an $A^*$-invariant set $G$, a shift-invariant subspace $V$ is said to generate an $A$-generalized multiresolution analysis ($A$-GMRA) in $H^2_G$ 
if it is $A$-refinable, $\bigcap_{j\in\mathbb{Z}}\mathcal{D}_{A}^{j}V=\{0\}$ and $\overline{\bigcup_{j\in\mathbb{Z}}\mathcal{D}_{A}^{j}V}=H^2_G$ (see \cite{BMM1}). 
In this note we will study the completeness property
\begin{displaymath}
\overline{\bigcup_{j\in\mathbb{Z}}\mathcal{D}_{A}^{j}V}=H^2_G\,.
\end{displaymath}

In section \ref{sec:tools} we develope some tools which will be necessary in the following sections. 
In section \ref{sec:completeness} we provide several characterizations which generalize Theorem \ref{thm:characterizations for L2}. 
For a review on the intersection property $\bigcap_{j\in\mathbb{Z}}\mathcal{D}_{A}^{j}V=\{0\}$ we refer to \cite{Bow5}. 
In section \ref{sec:wavelets} we make some related comments on the local behaviour of the Fourier transform of the multiwavelets.

\section{Tools} \label{sec:tools}

In this section we develope some tools which will be necessary in the following sections. 
We summarize some properties of the spectral function in the next proposition 
(see \cite{BR}, \cite{GuHa:05}). 
We use the symbol $\oplus$ for orthogonal direct sums of subspaces.

\begin{proposition} \label{prop:properties spectral function}
Let $V,\{V_n\}_{n=1}^{\infty}$ be shift-invariant subspaces of $L^2(\mathbb{R}^{d})$, and $G$ an $A^*$-invariant set. 
Then the following properties hold.
\begin{description}
	\item[(i)] $\sigma_{L^2(\mathbb{R}^{d})}=1$ and $\sigma_{\{0\}}=0$ $a.e.$;
	\item[(ii)] If $V=\bigoplus_{n=1}^{\infty}V_n$, 
	then $\sigma_V=\sum_{n=1}^{\infty}\sigma_{V_n}$;
	\item[(iii)] Assume $V_1\subseteq V$. 
	Then $\sigma_{V_1}\leq\sigma_V$  $a.e.$, and $\sigma_{V_1}=\sigma_V$ $a.e.$ if and only if $V_1=V$;
	\item[(iv)] $\sigma_{H^2_G}={\text{\Large $\chi$}}_G$ and 
	$V\subseteq H^2_G$ if and only if $\sigma_V\leq{\text{\Large $\chi$}}_G$ $a.e.$;
	\item[(v)] $0\leq\sigma_V\leq 1$ $a.e.$;
	\item[(vi)] If $\mathcal{F}\subseteq V$ generates $V$, then
\begin{equation} \label{eq:support of spectral function}
Supp(\sigma_V)=\bigcup_{\phi\in\mathcal{F}}Supp(\widehat{\phi})\,;
\end{equation}
	\item[(vii)] The spectral function of the shift-invariant subspace $\mathcal{D}_{A}V$ is
\begin{displaymath}
\sigma_{\mathcal{D}_AV}(\xi)=\sigma_V(A^{*-1}\xi)\quad a.e.\,.
\end{displaymath}
\end{description}
\end{proposition}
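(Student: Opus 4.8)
My plan is to reduce all seven items to two elementary facts about tight frame generators, both immediate from the definition in Section~1. \emph{Fact (a):} if a shift-invariant subspace $W$ admits a tight frame generator $\{\psi^\beta\}$ with $\sum_\beta|\widehat{\psi^\beta}|^2=0$ $a.e.$ (equivalently $\sigma_W=0$ $a.e.$), then every $\widehat{\psi^\beta}$ vanishes a.e., so the Parseval identity forces $\|f\|=0$ for all $f\in W$, i.e.\ $W=\{0\}$. \emph{Fact (b):} if $V=\bigoplus_n V_n$ is an orthogonal sum of shift-invariant subspaces and $\mathcal{G}_n$ is a tight frame generator of $V_n$, then $\bigcup_n\mathcal{G}_n$ is a tight frame generator of $V$: for $f=\sum_n f_n\in V$ and $\phi\in\mathcal{G}_n$ one has $<f,\phi(\cdot-k)>=<f_n,\phi(\cdot-k)>$ because $\phi(\cdot-k)\in V_n\perp V_m$ for $m\neq n$, and summing the Parseval sums gives $\|f\|^2=\sum_n\|f_n\|^2$. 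Squaring Fourier transforms turns Fact~(b) into (ii). Throughout I will use, as recalled in Section~1, that $\sigma_V$ is independent of the chosen tight frame generator, that every shift-invariant subspace has one, and that $H^2_G$ is a closed shift-invariant subspace for every measurable $G\subseteq\mathbb{R}^d$.

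Granting (ii), the other items follow quickly. For (i): $\sigma_{\{0\}}=0$ is trivial, and taking the orthonormal system $\{\phi^n\}_{n\in\mathbb{Z}^d}$ with $\widehat{\phi^n}=\chi_{Q+n}$ ($Q$ a fundamental domain of $\mathbb{Z}^d$), which is a tight frame generator of $L^2(\mathbb{R}^d)$, gives $\sigma_{L^2(\mathbb{R}^d)}=\sum_n\chi_{Q+n}=1$ $a.e.$ For (iii): writing $V=V_1\oplus(V\ominus V_1)$, Fact~(b) gives $\sigma_V=\sigma_{V_1}+\sigma_{V\ominus V_1}\geq\sigma_{V_1}$, and if $\sigma_{V_1}=\sigma_V$ $a.e.$ then $\sigma_{V\ominus V_1}=0$ $a.e.$, so $V\ominus V_1=\{0\}$ by Fact~(a); the converse is obvious. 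For (iv): decomposing $L^2(\mathbb{R}^d)=H^2_G\oplus H^2_{\mathbb{R}^d\setminus G}$ and using (i) and (ii) gives $\sigma_{H^2_G}+\sigma_{H^2_{\mathbb{R}^d\setminus G}}=1$ $a.e.$; any tight frame generator of $H^2_G$ consists of functions with Fourier transform supported in $G$, so $\sigma_{H^2_G}$ vanishes off $G$, and likewise $\sigma_{H^2_{\mathbb{R}^d\setminus G}}$ vanishes off $\mathbb{R}^d\setminus G$, whence $\sigma_{H^2_G}=\chi_G$ $a.e.$; then $V\subseteq H^2_G$ gives $\sigma_V\leq\chi_G$ by (iii), and conversely $\sigma_V\leq\chi_G$ $a.e.$ forces the Fourier transforms of a tight frame generator of $V$ to be supported in $G$, so $V\subseteq H^2_G$. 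For (v), $\sigma_V\geq 0$ is clear and $\sigma_V\leq 1$ follows from (iii) with $V\subseteq L^2(\mathbb{R}^d)$. For (vi), set $\Omega=\bigcup_{\phi\in\mathcal{F}}Supp(\widehat{\phi})$ and $\Omega_0=Supp(\sigma_V)$: each $\phi\in\mathcal{F}$ lies in $H^2_\Omega$, hence so does $V$, so $\sigma_V\leq\chi_\Omega$ by (iv) and $\Omega_0\subseteq\Omega$; conversely a tight frame generator $\{\phi^\alpha\}$ of $V$ has $\sigma_V=\sum_\alpha|\widehat{\phi^\alpha}|^2$, so each $\widehat{\phi^\alpha}$ is supported in $\Omega_0$, hence $V\subseteq H^2_{\Omega_0}$ and thus $Supp(\widehat{\phi})\subseteq\Omega_0$ for every $\phi\in\mathcal{F}\subseteq V$, giving $\Omega\subseteq\Omega_0$.

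The one delicate item is (vii), because $\mathcal{D}_A$ does not conjugate the $\mathbb{Z}^d$-translations into themselves: from $(\mathcal{D}_A g)(\cdot-k)=\mathcal{D}_A\big(g(\cdot-Ak)\big)$ one sees that the $\mathbb{Z}^d$-shifts of $\mathcal{D}_A\phi$ correspond to the $A\mathbb{Z}^d$-shifts of $\phi$, and $A\mathbb{Z}^d$ has index $d_A$ in $\mathbb{Z}^d$. My plan is: fix a tight frame generator $\{\phi^\alpha\}_{\alpha\in I}$ of $V$ and a complete set $\Gamma$ of coset representatives of $\mathbb{Z}^d/A\mathbb{Z}^d$ (so $|\Gamma|=d_A$); then $\{\phi^\alpha(\cdot-\gamma)\}_{\alpha\in I,\,\gamma\in\Gamma}$ is a Parseval frame for $V$ with respect to $A\mathbb{Z}^d$-translations, since $\Gamma+A\mathbb{Z}^d=\mathbb{Z}^d$ just reindexes the original $\mathbb{Z}^d$-shifts; applying the unitary $\mathcal{D}_A$ and using the intertwining relation together with $\mathcal{D}_A(\phi^\alpha(\cdot-\gamma))\in\mathcal{D}_A V$, the family $\{\mathcal{D}_A(\phi^\alpha(\cdot-\gamma))\}_{\alpha,\gamma}$ becomes a genuine tight frame generator of $\mathcal{D}_A V$ with respect to $\mathbb{Z}^d$-translations (its $\mathbb{Z}^d$-shifts are exactly $\{\mathcal{D}_A(\phi^\alpha(\cdot-n))\}_{\alpha,\,n\in\mathbb{Z}^d}$). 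Then I would compute $\sigma_{\mathcal{D}_A V}$ from this generator: using $\widehat{\mathcal{D}_A g}(\xi)=d_A^{-1/2}\widehat{g}(A^{*-1}\xi)$ and $|\widehat{\phi^\alpha(\cdot-\gamma)}|=|\widehat{\phi^\alpha}|$, the $d_A$ choices of $\gamma$ each contribute $d_A\cdot d_A^{-1}|\widehat{\phi^\alpha}(A^{*-1}\xi)|^2$, and summing over $\alpha$ yields $\sigma_{\mathcal{D}_A V}(\xi)=\sum_\alpha|\widehat{\phi^\alpha}(A^{*-1}\xi)|^2=\sigma_V(A^{*-1}\xi)$ $a.e.$

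I expect the only real obstacle to be the lattice bookkeeping in (vii) --- verifying that $(\gamma,k)\mapsto\gamma+Ak$ is a bijection $\Gamma\times\mathbb{Z}^d\to\mathbb{Z}^d$, so that the reindexing genuinely yields a Parseval frame and the identity transfers correctly under $\mathcal{D}_A$ --- together with the routine care about null sets when handling $Supp(\widehat{\phi})$ and $Supp(\sigma_V)$. All the other items are soft, resting only on Facts~(a) and (b) and on elementary properties of orthogonal projections and Parseval frames.
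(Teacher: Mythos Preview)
Your proposal is correct and self-contained; the paper, by contrast, cites \cite{BR} for items (i), (ii), (iii), (v), (vii) and only argues (iv) and (vi) directly, so your write-up actually proves more than the paper does here.

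Where the two overlap, the arguments differ. For (iv) the paper exhibits an explicit tight frame generator of $H^2_G$, namely $\{\phi^m\}_{m\in\mathbb{Z}^d}$ with $\widehat{\phi^m}=\chi_{([0,1)^d+m)\cap G}$, and computes $\sigma_{H^2_G}$ from it; you instead split $L^2=H^2_G\oplus H^2_{\mathbb{R}^d\setminus G}$, use (i)+(ii), and force $\sigma_{H^2_G}=\chi_G$ from the support constraint. Your route is slicker and, importantly, never uses the $A^*$-invariance of $G$, so you actually prove (iv) for an arbitrary measurable set --- which you then exploit in (vi). For (vi) the paper argues that the union of supports is the same for any generating family by writing the Fourier transform of a finite combination of shifts as trigonometric polynomials times the $\widehat{\phi}$'s; you instead sandwich via the inclusions $V\subseteq H^2_{\Omega}$ and $V\subseteq H^2_{\Omega_0}$ using your generalized (iv). Both are fine; the paper's is slightly more hands-on, yours is cleaner but silently relies on the stronger-than-stated (iv).

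Two small points to tidy. In (vii) your phrase ``the $d_A$ choices of $\gamma$ each contribute $d_A\cdot d_A^{-1}|\widehat{\phi^\alpha}(A^{*-1}\xi)|^2$'' should read ``together contribute'': each $\gamma$ gives $d_A^{-1}|\widehat{\phi^\alpha}(A^{*-1}\xi)|^2$, and the $d_A$ of them sum to $|\widehat{\phi^\alpha}(A^{*-1}\xi)|^2$, which is exactly the conclusion you state in the next clause. And when you invoke (iv) in the proof of (vi), flag explicitly that your proof of (iv) did not use $A^*$-invariance, so it applies to the (generally not $A^*$-invariant) sets $\Omega$ and $\Omega_0$.
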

\begin{proof}
{The proof of (i), (ii), (iii), (v) and (vii) can be found in \cite{BR}. 
We prove (iv). 
Firstly, if for any integer $m$ we define $\phi^m$ by the identity $\widehat{\phi^m}:={\text{\Large $\chi$}}_{([0,1)^d+m)\cap G}$, 
we have $\phi^m\in H^2_G$. 
It is easy to see that $\{(\phi^m(\cdot-k))^{\,\widehat{}\,}\}_{k,m\in\mathbb{Z}^d}$ is a tight frame of $L^2(G)$ 
(note that $(\phi^m(\cdot-k))^{\,\widehat{}}(\xi)=e^{-2\pi ik\cdot\xi}\,{\text{\Large $\chi$}}_{([0,1)^d+m)\cap G}(\xi)$) 
so $\{\phi^m\}_{m\in\mathbb{Z}^d}$ is a tight frame generator of $H^2_G$. 
Thus
\begin{displaymath}
\sigma_{H^2_G}(\xi)=
\sum_{m\in\mathbb{Z}^d}|\widehat{\phi^m}(\xi)|^2=
\sum_{m\in\mathbb{Z}^d}{\text{\Large $\chi$}}_{([0,1)^d+m)\cap G}(\xi)=
{\text{\Large $\chi$}}_G(\xi)\,.
\end{displaymath}
If $V\subseteq H^2_G$, by (ii) we have $\sigma_V(\xi)\leq{\text{\Large $\chi$}}_G(\xi)$ for almost every $\xi\in\mathbb{R}^d$. 
Reciprocally, if $\mathcal{G}$ is a tight frame generator of $V$ and $\sigma_V(\xi)\leq{\text{\Large $\chi$}}_G(\xi)$, 
then any $\phi\in\mathcal{G}$ satisfies $\widehat{\phi}(\xi)=0$ for almost every $\xi\notin G$. 
Consequently any $f\in V$ is in $H^2_G$, so $V\subseteq H^2_G$. 
This proves (iv).

For (vi) firstly note that \eqref{eq:support of spectral function} is obvious if 
$\mathcal{F}$ is a tight frame generator of $V$. 
Then it is enough to remark that if $f$ is in the linear span of the shifts of elements in $\mathcal{F}$, 
then $\widehat{f}$ is a sum of products of trigonometric polynomials times the Fourier transforms of these elements. 
Consequently $Supp\widehat{f}$ is included in the right side of \eqref{eq:support of spectral function}. 
This easily implies that for any $\mathcal{F}$ this set coincides.
}
\end{proof}
One can see that $\{\phi_{(j,k)}^{\alpha}:=d_A^{\frac{j}{2}}\,\phi^{\alpha}(A^j\cdot-k)\::\:k\in\mathbb{Z}^d,\alpha\in I\}$ is a tight frame of $\mathcal{D}_{A}^{j}V$ for all $j\in\mathbb{Z}$ (see \cite{BR}). 
Furthermore, if we denote by $P_j$ the orthogonal projection operator on $\mathcal{D}_{A}^{j}V$ for any integer $j$, we have
\begin{equation} \label{eq:formula projections}
P_jf=
\sum_{\alpha\in I}\sum_{k\in\mathbb{Z}^d}<f,\phi_{(j,k)}^{\alpha}>\,\phi_{(j,k)}^{\alpha}
\quad\mbox{for all }f\in L^2(\mathbb{R}^{d})\,.
\end{equation}


In \cite{KSa} some generalizations of Definitions \ref{defi:point of A-density} and \ref{defi:point of A-approximate continuity} were introduced. 
They allow us to deal with the general case in $A$-reducing spaces. 
Fix an $A$-invariant set $G$.
\begin{definition}
Let $E\subseteq\mathbb{R}^d$ be a set of positive measure. 
We say that the origin is a point of $(G,A)$-density for $E$ if for all $r>0$
\begin{displaymath}
\lim_{j\to\infty}\frac{|E\cap G\cap A^{-j}B_r|}{|G\cap A^{-j}B_r|}=1\,.
\end{displaymath}
\end{definition}
It is easy to prove that if $F\subseteq\mathbb{R}^d$ measurable is such that 
there exist $r_1,r_2\in\mathbb{R}$ with $0<r_1<r_2<\infty$ and $B_{r_1}\subseteq F\subseteq B_{r_2}$, 
then the origin is a point of $(G,A)$-density for $E$ if and only if
\begin{displaymath}
\lim_{j\to\infty}\frac{|E\cap G\cap A^{-j}F|}{|G\cap A^{-j}F|}=1\,,
\end{displaymath}
or equivalently
\begin{displaymath}
\lim_{j\to\infty}\frac{|(\mathbb{R}^d\setminus E)\cap G\cap A^{-j}F|}{|G\cap A^{-j}F|}=0\,.
\end{displaymath}
Note that if we put $G=\mathbb{R}^d$ in the above definition we recover Definition \ref{defi:point of A-density}: 
that is, the origin is a point of $A$-density for $E$.
\begin{definition}
Let $f\,:\,\mathbb{R}^d\longrightarrow\mathbb{C}$ be a measurable function. 
We say that the origin is a point of $(G,A)$-approximate continuity of $f$, 
if there exists $E\subseteq\mathbb{R}^d$ of positive measure such that the origin is a point of $(G,A)$-density for $E$ and
\begin{displaymath}
\lim_{\begin{subarray}{c} x\to 0 \\ x\in E \end{subarray}}f(x)=f(0)\,.
\end{displaymath}
We also say that $f$ is $(G,A)$-locally nonzero at the origin if
\begin{displaymath}
\lim_{j\to\infty}\frac{|\{x\in G\cap A^{-j}B_1\::\:f(x)=0\}|}{|G\cap A^{-j}B_1|}=0\,.
\end{displaymath}
That is, if the origin is a point of $(G,A)$-density for $Supp(f)$.
\end{definition}
Similarly, if we put $G=\mathbb{R}^d$ in this definition then we will take that 
the origin is a point of $A$-approximate continuity of $f$, 
and $f$ is $A$-locally nonzero at the origin, respectively.

The following characterizations of approximate continuity hold:
\begin{lemma} \label{lemma:conditions for approximate continuity}
Let $f\,:\,\mathbb{R}^d\longrightarrow\mathbb{C}$ be a measurable function, 
and $F\subseteq\mathbb{R}^d$ measurable such that there exist $r_1,r_2\in\mathbb{R}$ with $0<r_1<r_2<\infty$ and $B_{r_1}\subseteq F\subseteq B_{r_2}$. 
Then the following conditions are equivalent:
\begin{description}
	\item[(i)] the origin is a point of $(G,A)$-approximate continuity of $f$;
	\item[(ii)] for all $\varepsilon>0$
	\begin{displaymath}
	\lim_{j\to\infty}\frac{|\{x\in G\cap A^{-j}F\::\:|f(x)-f(0)|<\varepsilon\}|}{|G\cap A^{-j}F|}=1\,;
	\end{displaymath}
	\item[(iii)] for all $\varepsilon>0$ there exists a positive integer $j_0$ such that for all $j\geq j_0$
	\begin{displaymath}
	\frac{|\{x\in G\cap A^{-j}F\::\:|f(x)-f(0)|\geq\varepsilon\}|}{|G\cap A^{-j}F|}<\varepsilon\,.
	\end{displaymath}
\end{description}
\end{lemma}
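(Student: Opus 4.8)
The plan is to prove the chain of implications $(i)\Rightarrow(ii)\Rightarrow(iii)\Rightarrow(i)$, where the third implication is essentially immediate from the definition of a limit and the second is a trivial reformulation (replacing ``$<\varepsilon$'' by ``$\geq\varepsilon$'' and passing to the complement inside $G\cap A^{-j}F$, using that $|G\cap A^{-j}F|<\infty$ since $F$ is bounded). So the substantive content is the equivalence of $(i)$ with $(ii)$. First I would record the reduction, already stated in the text just before the lemma, that the shape of $F$ between two balls is irrelevant: for any such $F$ the notion of a point of $(G,A)$-density is the same as the one defined with balls $B_r$, so in particular $(ii)$ for one admissible $F$ is equivalent to $(ii)$ for any other (and for $F=B_1$). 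This lets me work with whichever set is convenient.

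For $(ii)\Rightarrow(i)$: assume $(ii)$ holds. For each $n\in\mathbb{N}$ set $E_n:=\{x\in\mathbb{R}^d:|f(x)-f(0)|<1/n\}$; by $(ii)$, the origin is a point of $(G,A)$-density for each $E_n$. I would then build a single set $E$ witnessing approximate continuity by a diagonal construction: using $(ii)$ with $\varepsilon=1/n$ and $F=B_1$, pick a strictly decreasing sequence $r_n\downarrow 0$ such that for all $r\leq r_n$ one has $|E_n\cap G\cap A^{-j}B_1|/|G\cap A^{-j}B_1|$ close to $1$ in the appropriate $j$-range — more carefully, I want to interleave the radii $A^{-j}B_1$ with the thresholds, so the cleanest route is to define $E$ by taking, on each dyadic-type annulus $A^{-j}B_1\setminus A^{-(j+1)}B_1$ of the fan, the set $E_{n(j)}$ with $n(j)\to\infty$ chosen slowly enough. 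Then $E\cap(A^{-j}B_1\setminus A^{-(j-1)}B_1)$ agrees with $E_{n(j)}$ there, so $x\in E$, $x\to0$ forces $|f(x)-f(0)|\to0$, giving $\lim_{x\to0,x\in E}f(x)=f(0)$; and a short estimate splitting $G\cap A^{-j}B_1$ into annuli shows the density of $E$ at the origin along $G$ is $1$, because on all but finitely many of the inner annuli we are inside some $E_n$ with $n$ as large as we like. Hence $(i)$ holds.

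For $(i)\Rightarrow(ii)$: let $E$ be the witnessing set, so the origin is a point of $(G,A)$-density for $E$ and $f\to f(0)$ along $E$ at the origin. Fix $\varepsilon>0$. By the $\varepsilon$-$\delta$ statement of the limit along $E$, there is $\delta>0$ with $|f(x)-f(0)|<\varepsilon$ for all $x\in E\cap B_\delta$. Since $A$ is expansive, $A^{-j}B_1\subseteq B_\delta$ for all large $j$, so for such $j$,
\begin{displaymath}
E\cap G\cap A^{-j}B_1\;\subseteq\;\{x\in G\cap A^{-j}B_1:|f(x)-f(0)|<\varepsilon\}\,,
\end{displaymath}
whence
\begin{displaymath}
\frac{|\{x\in G\cap A^{-j}B_1:|f(x)-f(0)|<\varepsilon\}|}{|G\cap A^{-j}B_1|}\;\geq\;\frac{|E\cap G\cap A^{-j}B_1|}{|G\cap A^{-j}B_1|}\;\xrightarrow[j\to\infty]{}\;1\,,
\end{displaymath}
and since the left-hand ratio is at most $1$ it converges to $1$. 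By the $F$-independence noted above this gives $(ii)$ for every admissible $F$.

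I expect the only real obstacle to be the diagonal construction in $(ii)\Rightarrow(i)$: one must choose the annuli and the indices $n(j)$ so that simultaneously (a) $f\to f(0)$ genuinely along $E$ (not merely in density) and (b) the $(G,A)$-density of $E$ at the origin is exactly $1$. The delicate point is that $(ii)$ controls ratios only on the full balls $A^{-j}B_1$, not on thin annuli, so bounding $|E^c\cap G\cap A^{-j}B_1|$ requires telescoping the ball estimates across scales and absorbing the finitely many bad inner annuli; the expansiveness of $A$ (uniform comparability of $|G\cap A^{-j}B_1|$ across neighbouring $j$, via $\det A$) is what makes this telescoping work. Everything else is bookkeeping with the definitions.
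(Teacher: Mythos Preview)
Your $(i)\Rightarrow(ii)$ argument coincides with the paper's. For the converse the two routes diverge. You build the witnessing set $E$ by an annulus (``layer-cake'') construction: on each shell $R_j=A^{-j}B_1\setminus A^{-(j+1)}B_1$ take the good set $E_{n(j)}=\{|f-f(0)|<1/n(j)\}$, with $n(j)\nearrow\infty$ chosen slowly. The paper instead proves $(iii)\Rightarrow(i)$ by \emph{removing} bad sets at a sparse sequence of scales: it chooses $j_n\nearrow\infty$ so that $F_n:=\{x\in G\cap A^{-j_n}F:|f(x)-f(0)|\ge 2^{-n-1}\}$ has relative measure below $2^{-n-1}$, and sets $E=(G\cap F)\setminus\bigcup_n F_n$. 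The density estimate for this $E$ splits cleanly into one near-scale term controlled by $(iii)$ and a geometric tail $\sum_{m>n}2^{-m-1}$, with no telescoping across all scales.

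Your construction also works, but note a point you gloss over: for a general expansive $A$ one need not have $A^{-1}B_1\subseteq B_1$, so your shells $R_j$ can overlap and do not form a partition. The argument survives this --- the inclusion $E^c\cap A^{-J}B_1\subseteq\bigcup_{j\ge J}(E_{n(j)}^c\cap A^{-j}B_1)$ holds regardless, because every $x\in A^{-J}B_1$ lies in some $R_{j^*}$ with $j^*\ge J$, and the $A$-invariance of $G$ gives $|G\cap A^{-j}B_1|=d_A^{J-j}|G\cap A^{-J}B_1|$, which is what makes your telescoped sum converge --- but the write-up should acknowledge this rather than tacitly assume nested Euclidean balls. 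The paper's ``bad-set removal'' sidesteps the issue entirely.

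One more remark: your opening sentence calls $(iii)\Rightarrow(i)$ ``essentially immediate from the definition of a limit''. That is a slip --- it is precisely the implication carrying the content, which you then correctly supply via $(ii)\Rightarrow(i)$ together with the (genuinely trivial) $(ii)\Leftrightarrow(iii)$.
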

\begin{proof}
{(i)$\Longrightarrow$(ii) 
We suppose that the origin is a point of $(G,A)$-approximate continuity of $f$, 
so there exists a measurable set $E\subseteq\mathbb{R}^d$ such that
\begin{displaymath}
\lim_{j\to\infty}\frac{|E\cap G\cap A^{-j}F|}{|G\cap A^{-j}F|}=1
\quad\mbox{and}\quad
\lim_{\begin{subarray}{c} x\to 0 \\ x\in E \end{subarray}}f(x)=f(0)\,.
\end{displaymath}
Thus, if we fix $\varepsilon>0$ and $\widetilde{\varepsilon}>0$, 
there exist $\delta>0$ and a positive integer $j_1$ such that for all $j\geq j_1$ and $x\in B_{\delta}\cap E$
\begin{displaymath}
1-\frac{|E\cap G\cap A^{-j}F|}{|G\cap A^{-j}F|}<\widetilde{\varepsilon}
\quad\mbox{and}\quad
|f(x)-f(0)|<\varepsilon\,.
\end{displaymath}
Since $A$ is an expansive dilation, 
there exists a positive integer $j_2$ such that for all $j\geq j_2$ one has $A^{-j}B_{r_2}\subseteq B_{\delta}$, 
so $G\cap A^{-j}F\subseteq A^{-j}B_{r_2}\subseteq B_{\delta}$. 
Consequently, if $x\in E\cap G\cap A^{-j}F$ one has $|f(x)-f(0)|<\varepsilon$. 
This implies $E\cap G\cap A^{-j}F\subseteq\{x\in G\cap A^{-j}F\::\:|f(x)-f(0)|<\varepsilon\}$. 
Finally, if $j\geq j_0:=\max\{j_1,j_2\}$ one has
\begin{displaymath}
1-\frac{|\{x\in G\cap A^{-j}F\::\:|f(x)-f(0)|<\varepsilon\}|}{|G\cap A^{-j}F|}
\leq 1-\frac{|E\cap G\cap A^{-j}F|}{|G\cap A^{-j}F|}
<\widetilde{\varepsilon}\,.
\end{displaymath}
As $\widetilde{\varepsilon}$ is arbitrary, one gets $(ii)$.

(ii)$\Longrightarrow$(iii) 
Fix $\varepsilon>0,\,\widetilde{\varepsilon}>0$. 
From (ii) we know that there exists a positive integer $j_0$ such that for all $j\geq j_0$
\begin{displaymath}
1-\frac{|\{x\in G\cap A^{-j}F\::\:|f(x)-f(0)|<\varepsilon\}|}{|G\cap A^{-j}F|}
<\widetilde{\varepsilon}\,.
\end{displaymath}
Then
\begin{displaymath}
\frac{|\{x\in G\cap A^{-j}F\::\:|f(x)-f(0)|\geq\varepsilon\}|}{|G\cap A^{-j}F|}=
\end{displaymath}
\begin{displaymath}
=\frac{|G\cap A^{-j}F|}{|G\cap A^{-j}F|}-\frac{|\{x\in G\cap A^{-j}F\::\:|f(x)-f(0)|<\varepsilon\}|}{|G\cap A^{-j}F|}
<\widetilde{\varepsilon}\,.
\end{displaymath}
Choosing $\widetilde{\varepsilon}=\varepsilon$ one gets (iii).

(iii)$\Longrightarrow$(i) 
Suppose (iii), and for any positive integer $n$ make $\varepsilon:=2^{-n-1}>0$. 
Then there exists a positive integer $j_n$ such that for all $j\geq j_n$
\begin{displaymath}
\frac{|\{x\in G\cap A^{-j}F\::\:|f(x)-f(0)|\geq 2^{-n-1}\}|}{|G\cap A^{-j}F|}
<\frac{1}{2^{n+1}}\,.
\end{displaymath}
Note that $\{j_n\}_{n=1}^{\infty}$ can be taken strictly increasing. 
Define for any $n$
\begin{displaymath}
F_n:=\{x\in G\cap A^{-j_n}F\::\:|f(x)-f(0)|\geq 2^{-n-1}\}\,.
\end{displaymath}
We recall we have $\frac{|F_n|}{|G\cap A^{-j_n}F|}<2^{-n-1}$. 
Set now
\begin{displaymath}
E:=(G\cap F)\setminus\bigcup_{n=1}^{\infty}F_n\,.
\end{displaymath}
The set $E$ is evidently measurable. 
It is not empty also, as
\begin{displaymath}
\frac{|E|}{|G\cap F|}\geq 
1-\frac{|\bigcup_{n=1}^{\infty}F_n|}{|G\cap F|}\geq 
1-\sum_{n=1}^{\infty}\frac{|F_n|}{|G\cap F|}\geq
\end{displaymath}
\begin{displaymath}
\geq 1-\sum_{n=1}^{\infty}\frac{|F_n|}{d_A^{-j_n}\,|G\cap F|}
=1-\sum_{n=1}^{\infty}\frac{|F_n|}{|G\cap A^{-j_n}F|}
>1-\sum_{n=1}^{\infty}\frac{1}{2^{n+1}}
=\frac{1}{2}\,.
\end{displaymath}

Since $A$ is an expansive dilation, 
there exists a positive integer $j_0$ such that $A^{-j}F\subseteq F$ if $j\geq j_0$. 
Fixed $j\geq\max\{j_0,j_1\}$, let $n$ be a positive integer satisfying $j_n\leq j<j_{n+1}$. 
One has $\bigcup_{m=1}^nF_m\subseteq\{x\::\:|f(x)-f(0)|\geq 2^{-n-1}\}$, so
\begin{displaymath}
\frac{|(\mathbb{R}^d\setminus E)\cap G\cap A^{-j}F|}{|G\cap A^{-j}F|}\leq
\frac{|(\bigcup_{m=1}^{\infty}F_m)\cap G\cap A^{-j}F|}{|G\cap A^{-j}F|}\leq
\end{displaymath}
\begin{displaymath}
\leq\frac{|(\bigcup_{m=1}^{n}F_m)\cap G\cap A^{-j}F|}{|G\cap A^{-j}F|}+
\frac{|(\bigcup_{m=n+1}^{\infty}F_m)\cap G\cap A^{-j}F|}{|G\cap A^{-j}F|}\leq
\end{displaymath}
\begin{displaymath}
\leq\frac{|\{x\in G\cap A^{-j}F\::\:|f(x)-f(0)|\geq 2^{-n-1}\}|}{|G\cap A^{-j}F|}
+\sum_{m=n+1}^{\infty}\frac{|F_m\cap G\cap A^{-j}F|}{|G\cap A^{-j}F|}<
\end{displaymath}
\begin{displaymath}
<\frac{1}{2^{n+1}}+\sum_{m=n+1}^{\infty}\frac{|F_m|}{|G\cap A^{-j_m}F|}
<\frac{1}{2^{n+1}}+\sum_{m=n+1}^{\infty}\frac{1}{2^{m+1}}
=\frac{1}{2^{n}}\xrightarrow[j\to\infty]{}0\,.
\end{displaymath}
This is equivalent to the fact that the origin is point of $(G,A)$-density for $E$.

Finally let $n_0$ be a positive integer such that $2^{-n_0-1}<\varepsilon$, 
and $\delta>0$ such that $B_{\delta}\subseteq A^{-j_{n_0}}B_{r_1}$, 
so $B_{\delta}\subseteq A^{-j_{n_0}}B_{r_1}\subseteq A^{-j_{n_0}}F$. 
Thus, if $x\in E\cap B_{\delta}$ then $x\in G\cap A^{-j_{n_0}}F$ and $x\notin F_{n_0}$, so $|f(x)-f(0)|<2^{-n_0-1}<\varepsilon$. 
This proves that $\lim_{\begin{subarray}{c} x\to 0 \\ x\in E \end{subarray}}f(x)=f(0)$, 
so the origin is a point of $(G,A)$-approximate continuity of $f$.}
\end{proof}

The following lemma is proved in the same way as the corresponding result in \cite{Sa2}.
\begin{lemma} \label{lemma:sequences and approximate continuity}
Let $f\,:\,\mathbb{R}^d\longrightarrow\mathbb{C}$ be a measurable function. 
If one has
\begin{displaymath}
\lim_{j\to\infty}f(A^{-j}x)=f(0)\quad\mbox{for }a.e.\,x\in G
\end{displaymath}
then the origin is a point of $(G,A)$-approximate continuity of $f$. 
On the other hand, if the origin is a point of $(G,A)$-approximate continuity of $f$ 
then there exists an increasing sequence of positive integers $\{j_k\}_{k=1}^{\infty}$ such that
\begin{displaymath}
\lim_{k\to\infty}f(A^{-j_k}x)=f(0)\quad\mbox{for }a.e.\,x\in G\,.
\end{displaymath}
\end{lemma}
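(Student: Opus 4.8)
The plan is to establish the two implications separately, using Lemma \ref{lemma:conditions for approximate continuity} to translate between approximate continuity and the quantitative density conditions (ii)--(iii), and choosing $F=B_1$ throughout (so that $A^{-j}F=A^{-j}B_1$ and the hypotheses of that lemma are trivially met).

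For the first implication, suppose $\lim_{j\to\infty}f(A^{-j}x)=f(0)$ for a.e.\ $x\in G$. Fix $\varepsilon>0$. For each $x\in G$ outside a null set there is $j(x)$ with $|f(A^{-j}x)-f(0)|<\varepsilon$ for all $j\geq j(x)$; hence the sets $S_N:=\{x\in G\cap B_1 : |f(A^{-j}x)-f(0)|<\varepsilon \ \forall j\geq N\}$ increase to $G\cap B_1$ up to a null set, so $|S_N|\to|G\cap B_1|$ by monotone (or dominated) convergence. Now the key computation is a change of variables: the condition $|f(A^{-j}x)-f(0)|<\varepsilon$ with $y=A^{-j}x$ says $y\in G\cap A^{-j}B_1$ (using $A^*$-invariance of $G$, i.e.\ $A^{-j}G=G$) and $|f(y)-f(0)|<\varepsilon$, and since $A^{-j}$ scales Lebesgue measure by $d_A^{-j}$ on both numerator and denominator, for $j\geq N$
\begin{displaymath}
\frac{|\{y\in G\cap A^{-j}B_1 : |f(y)-f(0)|<\varepsilon\}|}{|G\cap A^{-j}B_1|}\geq\frac{|S_N|}{|G\cap B_1|}\xrightarrow[N\to\infty]{}1\,.
\end{displaymath}
This is exactly condition (ii) of Lemma \ref{lemma:conditions for approximate continuity} with $F=B_1$, so the origin is a point of $(G,A)$-approximate continuity of $f$.

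For the converse, assume the origin is a point of $(G,A)$-approximate continuity of $f$. By Lemma \ref{lemma:conditions for approximate continuity}(iii) with $F=B_1$, for each $n$ there is $j_n$ (chosen strictly increasing) such that for all $j\geq j_n$ the density of $\{y\in G\cap A^{-j}B_1 : |f(y)-f(0)|\geq 2^{-n}\}$ in $G\cap A^{-j}B_1$ is less than $2^{-n}$. Reversing the change of variables above, this says $|\{x\in G\cap B_1 : |f(A^{-j_n}x)-f(0)|\geq 2^{-n}\}|<2^{-n}|G\cap B_1|$. By Borel--Cantelli, for a.e.\ $x\in G\cap B_1$ one has $|f(A^{-j_n}x)-f(0)|<2^{-n}$ for all large $n$, hence $\lim_{k\to\infty}f(A^{-j_k}x)=f(0)$ along the subsequence $j_k:=j_n$; finally, since $G$ is $A^*$-invariant, any $x\in G$ lies in $A^{-m}B_1$ for some $m$, and applying the almost-everywhere statement to $A^{m}x\in G\cap B_1$ (together with $A^{-(j_k-m)}(A^{m}x)=A^{-j_k}x$ for $k$ large) extends the conclusion from $G\cap B_1$ to all of $G$.

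The main obstacle is bookkeeping rather than conceptual: one must handle the dependence of the ``entry time'' $j(x)$ on $x$ carefully (Egorov-type control, encapsulated above by the monotone increase of $S_N$) and must be scrupulous about the $A^*$-invariance $A^{-j}G=G$, which is what makes the change of variables send $G\cap B_1$ to $G\cap A^{-j}B_1$ and thus lets the measure-scaling factor $d_A^{-j}$ cancel in the ratios. I also expect to invoke, as stated, that the analogue in \cite{Sa2} is proved by exactly this argument, so the write-up can be kept brief.
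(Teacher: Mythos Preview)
Your first implication is correct and matches the paper's argument exactly: your sets $S_N$ are the paper's $E_j=\bigcap_{k\geq j}F_k$, and both of you use the change of variables $y=A^{-j}x$ (with $A^{-j}G=G$) to identify the density ratio in $G\cap A^{-j}B_1$ with $|F_j|/|G\cap B_1|$, then squeeze by $E_j\subseteq F_j$.

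The second implication has a genuine gap in the extension from $G\cap B_1$ to all of $G$. The identity you write, $A^{-(j_k-m)}(A^m x)=A^{-j_k}x$, is false: the left side equals $A^{2m-j_k}x$. Presumably you intend $A^{-(j_k+m)}(A^m x)=A^{-j_k}x$, which is correct, but it does not help. Your Borel--Cantelli gives $f(A^{-j_k}y)\to f(0)$ for a.e.\ $y\in G\cap B_1$; applying this to $y=A^m x$ yields $f(A^{m-j_k}x)\to f(0)$, i.e.\ convergence of $f(A^{-j}x)$ along the \emph{shifted} sequence $\{j_k-m\}$, not along $\{j_k\}$. A shift of a subsequence is in general an entirely different subsequence, so you cannot conclude $f(A^{-j_k}x)\to f(0)$ for a.e.\ $x\in G$ this way.

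The paper avoids this by observing that Lemma~\ref{lemma:conditions for approximate continuity} (with $F=B_r$) gives convergence in measure of $f(A^{-j}\cdot)$ to $f(0)$ on $G\cap B_r$ for \emph{every} $r>0$; it then invokes the Riesz theorem on each $G\cap B_r$ to get nested subsequences $\{j_k^r\}\subseteq\{j_k^{r-1}\}$ with a.e.\ convergence on $G\cap B_r$, and finishes with Cantor's diagonal to produce one subsequence that works a.e.\ on all of $G$. Your Borel--Cantelli is essentially the Riesz step for $r=1$; to repair your argument you must either run a diagonal across $r$, or---more in the spirit of your approach---at stage $n$ choose $j_n$ using $F=B_n$ instead of $F=B_1$ (the resulting exceptional sets still have summable measures since $\sum_n 2^{-n}|G\cap B_n|<\infty$), so that a single Borel--Cantelli covers all of $G$ at once.
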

\begin{proof}
{Suppose $\lim_{j\to\infty}f(A^{-j}x)=f(0)$ for almost every $x\in G$ and pick $\varepsilon>0$. 
We define for any positive integer $j$
\begin{displaymath}
F_j:=\{x\in G\cap B_1\::\:|f(A^{-j}x)-f(0)|<\varepsilon\}
\end{displaymath}
and
\begin{displaymath}
E_j:=\bigcap_{k\geq j}F_k=
\{x\in G\cap B_1\::\:|f(A^{-k}x)-f(0)|<\varepsilon\:\:\mbox{for all}\:k\geq j\}\,.
\end{displaymath}
By definition $\{E_j\}_{j=1}^{\infty}$ is increasing and $\liminf_{j\to\infty}F_j=\bigcup_{j=1}^{\infty}E_j=\lim_{j\to\infty}E_j$. 
From our hypothesis we see that for almost every $x\in G\cap B_1$ there exists a positive integer $j_0$ 
such that for all $j\geq j_0$ $|f(A^{-j}x)-f(0)|<\varepsilon$. 
Thus $|G\cap B_1\setminus\bigcup_{j=1}^{\infty}E_j|=0$, 
so $\lim_{j\to\infty}|E_j|=|\bigcup_{j=1}^{\infty}E_j|=|G\cap B_1|$ and finally
\begin{displaymath}
1\geq
\frac{|\{x\in G\cap A^{-j}B_1\::\:|f(x)-f(0)|<\varepsilon\}|}{|G\cap A^{-j}B_1|}=
\frac{|F_j|}{|G\cap B_1|}\geq
\frac{|E_j|}{|G\cap B_1|}\xrightarrow[j\to\infty]{}1\,.
\end{displaymath}
By Lemma \ref{lemma:conditions for approximate continuity} the origin is a point of $(G,A)$-approximate continuity of $f$.

On the other hand, suppose that the origin is a point of $(G,A)$-approximate continuity of $f$. 
By Lemma \ref{lemma:conditions for approximate continuity} for any $r>0,\varepsilon>0$ one has
\begin{displaymath}
\frac{|\{x\in G\cap A^{-j}B_r\::\:|f(x)-f(0)|<\varepsilon\}|}{|G\cap A^{-j}B_r|}=
\end{displaymath}
\begin{displaymath}
=\frac{|\{x\in G\cap B_r\::\:|f(A^{-j}x)-f(0)|<\varepsilon\}|}{|G\cap B_r|}
\xrightarrow[j\to\infty]{}1\,,
\end{displaymath}
so $|\{x\in G\cap B_r\::\:|f(A^{-j}x)-f(0)|\geq\varepsilon\}|\xrightarrow[j\to\infty]{}0$. 
This is equivalent to the fact that $f(A^{-j}\cdot)\xrightarrow[j\to\infty]{}f(0)$ in measure on $G\cap B_r$ for all $r>0$.

Applying the Riesz theorem for any positive integer $r$ 
we can find an increasing subsequence $\{j_k^r\}_{k=1}^{\infty}$ of positive integers in such a way that $\{j_k^r\}_{k=1}^{\infty}\subseteq\{j_k^{r-1}\}_{k=1}^{\infty}$ for all $r$ 
and such that $f(A^{-j_k^r}x)\xrightarrow[k\to\infty]{}f(0)$ for almost every $x\in G\cap B_r$. 
Using Cantor's diagonal method of election we obtain that $f(A^{-j_k^k}x)\xrightarrow[k\to\infty]{}f(0)$ for almost every $x\in G$.}
\end{proof}

\section{Completeness property} \label{sec:completeness}

We present now several characterizations of the completeness property of $A$-refinable shift-invariant subspaces. 
They generalize Theorem \ref{thm:characterization Dutkay} and all the above-cited results on characterization of (one or several) scaling functions. 
As a corollary one obtains Theorem \ref{thm:characterizations for L2}, simply considering the case $G=\mathbb{R}^d$.

\begin{theorem} \label{thm:characterization one}
Let $V$ be an $A$-refinable shift-invariant subspace and assume that $\mathcal{G}$ generates $V$. 
Then $\overline{\bigcup_{j\in\mathbb{Z}}\mathcal{D}_{A}^{j}V}=H^2_G$, where
\begin{equation} \label{eq:G in terms of supports}
G:=\bigcup_{j\in\mathbb{Z}}A^{*j}(Supp(\sigma_V))
=\bigcup_{\phi\in\mathcal{G}}\bigcup_{j\in\mathbb{Z}}A^{*j}(Supp(\widehat{\phi}))\,.
\end{equation}
\end{theorem}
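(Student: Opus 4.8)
The plan is the following. First, the second equality in \eqref{eq:G in terms of supports} is immediate from Proposition~\ref{prop:properties spectral function}(vi): since $Supp(\sigma_V)=\bigcup_{\phi\in\mathcal{G}}Supp(\widehat{\phi})$, applying $\bigcup_{j\in\mathbb{Z}}A^{*j}(\,\cdot\,)$ to both sides gives the claim. Set $W:=\overline{\bigcup_{j\in\mathbb{Z}}\mathcal{D}_A^{j}V}$. As recalled in the Introduction, $W$ is an $A$-reducing space, hence $W=H^2_{G'}$ for some $A^*$-invariant measurable set $G'$, and by Proposition~\ref{prop:properties spectral function}(iv) one has $\sigma_W=\chi_{G'}$ $a.e.$, so that $G'=Supp(\sigma_W)$ up to a null set. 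Thus the theorem reduces to the identity $Supp(\sigma_W)=G$ up to a null set.

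The key step is to compute $\sigma_W$ by telescoping. Since $V$ is $A$-refinable, the spaces $\{\mathcal{D}_A^{j}V\}_{j\in\mathbb{Z}}$ are nested, so the union is increasing and $W=\overline{\bigcup_{j\geq 0}\mathcal{D}_A^{j}V}$; consequently
\begin{displaymath}
W=\mathcal{D}_A^{0}V\,\oplus\,\bigoplus_{j\geq 0}\bigl(\mathcal{D}_A^{j+1}V\ominus\mathcal{D}_A^{j}V\bigr)\,,
\end{displaymath}
and each summand is shift-invariant (the orthogonal complement of a shift-invariant subspace inside a shift-invariant subspace). Applying Proposition~\ref{prop:properties spectral function}(ii) to this decomposition, and also to $\mathcal{D}_A^{j+1}V=\mathcal{D}_A^{j}V\oplus(\mathcal{D}_A^{j+1}V\ominus\mathcal{D}_A^{j}V)$ to rewrite each term as $\sigma_{\mathcal{D}_A^{j+1}V}-\sigma_{\mathcal{D}_A^{j}V}$, the resulting series telescopes; together with Proposition~\ref{prop:properties spectral function}(vii), iterated, this yields
\begin{displaymath}
\sigma_W(\xi)=\lim_{j\to\infty}\sigma_{\mathcal{D}_A^{j}V}(\xi)=\lim_{j\to\infty}\sigma_V(A^{*-j}\xi)\qquad\text{for }a.e.\ \xi\in\mathbb{R}^d\,,
\end{displaymath}
where the sequence on the right is nondecreasing in $j$ (its successive differences are spectral functions, hence nonnegative) and bounded by $1$ by Proposition~\ref{prop:properties spectral function}(v), so the limit exists.

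It remains to identify the support of $\sigma_W$. From $V\subseteq\mathcal{D}_A V$ together with Proposition~\ref{prop:properties spectral function}(iii) and (vii) one gets $\sigma_V(\xi)\leq\sigma_V(A^{*-1}\xi)$ $a.e.$, hence $Supp(\sigma_V)\subseteq A^*Supp(\sigma_V)$ up to a null set; therefore the sets $A^{*j}Supp(\sigma_V)$, $j\geq 0$, increase to $\bigcup_{j\in\mathbb{Z}}A^{*j}Supp(\sigma_V)=G$. On the other hand, because $j\mapsto\sigma_V(A^{*-j}\xi)$ is nondecreasing, for $a.e.$ $\xi$ one has $\sigma_W(\xi)>0$ if and only if $\sigma_V(A^{*-j}\xi)>0$ for some $j\geq 0$, i.e. if and only if $\xi\in\bigcup_{j\geq 0}A^{*j}Supp(\sigma_V)=G$. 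Hence $Supp(\sigma_W)=G$ up to a null set, so $G'=G$ up to a null set and $W=H^2_{G'}=H^2_G$, as wanted.

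I do not expect a deep obstacle once Proposition~\ref{prop:properties spectral function} is available: the heart of the argument is the telescoping identity $\sigma_W=\lim_{j}\sigma_V(A^{*-j}\,\cdot\,)$, and what remains is a matter of care --- checking that the orthogonal complements $\mathcal{D}_A^{j+1}V\ominus\mathcal{D}_A^{j}V$ are genuinely shift-invariant so that Proposition~\ref{prop:properties spectral function}(ii) applies to this countable decomposition, that passing from the telescoping series of nonnegative functions to the stated pointwise monotone limit is legitimate $a.e.$, and that the manipulations with $Supp$ and with the index ranges $j\in\mathbb{Z}$ versus $j\geq 0$ are all carried out modulo null sets. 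The only external inputs are the two facts recalled in the Introduction: that $\overline{\bigcup_j\mathcal{D}_A^{j}V}$ is $A$-reducing, and that the $A$-reducing subspaces are exactly the spaces $H^2_{G'}$ with $G'$ $A^*$-invariant.
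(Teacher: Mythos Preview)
Your proof is correct and takes a genuinely different route from the paper's. The paper argues as follows: since $\overline{\bigcup_j\mathcal{D}_A^jV}=H^2_{\widetilde{G}}$ for some $A^*$-invariant $\widetilde{G}$, the inclusion $G\subseteq\widetilde{G}$ is immediate; for the reverse, it observes that for any $f\in H^2_{\widetilde{G}}$ the projections $P_jf$ converge to $f$ in $L^2$, and each $P_jf$ has Fourier support inside $G$ by the explicit tight-frame expansion \eqref{eq:formula projections}, whence $\widetilde{G}\subseteq G$. No spectral-function computation is performed.

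Your approach instead computes $\sigma_W$ directly via the telescoping orthogonal decomposition, obtaining $\sigma_W(\xi)=\lim_{j\to\infty}\sigma_V(A^{*-j}\xi)$ a.e., and then reads off $Supp(\sigma_W)$ from the monotonicity of this sequence. This is a bit longer but yields more: combined with $\sigma_W=\chi_G$, the identity $\lim_{j}\sigma_V(A^{*-j}\xi)=\chi_G(\xi)$ a.e.\ is essentially the content of Theorem~\ref{thm:characterization five}, which the paper establishes only later and by an independent argument passing through Theorems~\ref{thm:characterization three} and~\ref{thm:characterization four} and Lemma~\ref{lemma:sequences and approximate continuity}. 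The paper's proof of the present theorem is shorter and relies only on the projection formula, while yours leans more heavily on the additivity property of the spectral function (Proposition~\ref{prop:properties spectral function}(ii)) but delivers a stronger intermediate result.
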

\begin{proof}
{Firstly note that $G$ is obviously an $A^*$-invariant set, 
and \eqref{eq:G in terms of supports} follows from \eqref{eq:support of spectral function}. 
Clearly $\overline{\bigcup_{j\in\mathbb{Z}}\mathcal{D}_{A}^{j}V}$ is $A$-reducing, 
so equals $H^2_{\widetilde{G}}$ for some $A^*$-invariant set $\widetilde{G}$. 
Evidently $G\subseteq\widetilde{G}$. 
Let $P_j$ be the orthogonal projection operator on $\mathcal{D}_{A}^{j}V$ for any integer $j$. 
It is easy to see that $P_jf\xrightarrow[j\to+\infty]{}f$ in $L^2(\mathbb{R}^{d})$ for all $f\in H^2_{\widetilde{G}}$. 
We note that for any $j\in\mathbb{Z}$ and $f\in H^2_{\widetilde{G}}$ 
$P_jf$ is supported on $G$ (see formula \eqref{eq:formula projections}). 
Hence we deduce from this that $\widetilde{G}\subseteq G$, and we get that $G=\widetilde{G}$.}
\end{proof}

Now we present another characterization, but before that we introduce a new notion. 
Following Definition \ref{defi:A-absorbing}, we say that a measurable set $E\subseteq\mathbb{R}^d$ is $A$-absorbing in $G$ 
if for almost every $\xi\in G$ there exists a positive integer $j_0$ (possibly dependent on $\xi$) such that $A^{j}\xi\in E$ if $j\geq j_0$.

The following comment will be of importance in what follows. 
From the $A$-refinability of $V$ and Proposition \ref{prop:properties spectral function} we deduce that
\begin{displaymath}
\sigma_V(A^{*-1}\xi)=\sigma_{\mathcal{D}_AV}(\xi)\geq\sigma_V(\xi)
\quad\mbox{for }a.e.\,\xi\in\mathbb{R}^d\,.
\end{displaymath}
Hence, the sequences $\{\sigma_V(A^{*-j}\xi)\}_{j=1}^{\infty}$ are non decreasing, 
and consequently convergent for almost every $\xi\in\mathbb{R}^d$ 
(recall that the spectral function is bounded above by $1$). 
Note that in the so called principal case 
(the tight frame generator has only one member $\phi$) 
this monotony property follows from the scaling equation \eqref{eq:scaling equation}, 
since the low-pass filter $m$ satisfies $|m(\xi)|\leq 1$ for almost every $\xi\in\mathbb{R}^d$.

Applying this fact and Theorem \ref{thm:characterization one} thus we are able to prove the following:

\begin{theorem} \label{thm:characterization two}
Let $V\neq\{0\}$ be an $A$-refinable shift-invariant subspace and 
$G\subseteq\mathbb{R}^d$ an $A^*$-invariant set of positive measure such that 
$Supp(\sigma_V)\subseteq G$. 
Then the following conditions are equivalent:
\begin{description}
	\item[(a)] $\overline{\bigcup_{j\in\mathbb{Z}}\mathcal{D}_{A}^{j}V}=H^2_G$;
	\item[(b)] the set $Supp(\sigma_V)$ is $A^{*-1}$-absorbing in $G$;
	\item[(c)] $\lim_{j\to\infty}\sigma_V(A^{*-j}\xi)>0$ for almost every $\xi\in G$.
\end{description}
\end{theorem}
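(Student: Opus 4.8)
The plan is to establish the cycle (a)$\Rightarrow$(b)$\Rightarrow$(c)$\Rightarrow$(a), using Theorem \ref{thm:characterization one} as the bridge between the analytic statement (a) and the set-theoretic statements (b), (c), and using the monotonicity of $\{\sigma_V(A^{*-j}\xi)\}_j$ observed just before the statement.

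For (a)$\Rightarrow$(b): By Theorem \ref{thm:characterization one}, $\overline{\bigcup_{j}\mathcal{D}_A^jV}=H^2_{\widetilde G}$ with $\widetilde G=\bigcup_{j\in\mathbb{Z}}A^{*j}(Supp(\sigma_V))$; since (a) says this equals $H^2_G$ and $A^*$-invariant sets are determined mod null sets by their $H^2$-spaces, we get $G=\widetilde G$ up to measure zero. Now take $\xi\in G$. Then $\xi\in A^{*j_0}(Supp(\sigma_V))$ for some $j_0\in\mathbb{Z}$, i.e. $A^{*-j_0}\xi\in Supp(\sigma_V)$, hence $\sigma_V(A^{*-j_0}\xi)>0$. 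By the monotonicity $\sigma_V(A^{*-j}\xi)\geq\sigma_V(A^{*-j_0}\xi)>0$ for all $j\geq j_0$, so $A^{*-j}\xi\in Supp(\sigma_V)$ for all $j\geq j_0$; in the notation introduced before the statement (with the roles of $A$ and $A^{*-1}$ matched to "$A$-absorbing in $G$" read for the dilation $A^{*-1}$) this is exactly the assertion that $Supp(\sigma_V)$ is $A^{*-1}$-absorbing in $G$. The implication (b)$\Rightarrow$(c) is immediate from the same monotonicity: if for a.e. $\xi\in G$ one has $A^{*-j}\xi\in Supp(\sigma_V)$ for all large $j$, then $\sigma_V(A^{*-j}\xi)>0$ eventually, and since the sequence is nondecreasing its limit is $\geq$ that positive value, hence positive.

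For (c)$\Rightarrow$(a): Again invoke Theorem \ref{thm:characterization one}, which gives $\overline{\bigcup_{j}\mathcal{D}_A^jV}=H^2_{\widetilde G}$ with $\widetilde G=\bigcup_{j\in\mathbb{Z}}A^{*j}(Supp(\sigma_V))$. Since $Supp(\sigma_V)\subseteq G$ and $G$ is $A^*$-invariant we automatically have $\widetilde G\subseteq G$, so it remains to show $G\subseteq\widetilde G$ up to null sets, i.e. that $|G\setminus\widetilde G|=0$. Fix $\xi\in G$ for which $\lim_j\sigma_V(A^{*-j}\xi)>0$ (a.e. such $\xi$ by (c)). Then there is $j_0$ with $\sigma_V(A^{*-j_0}\xi)>0$, so $A^{*-j_0}\xi\in Supp(\sigma_V)$, whence $\xi=A^{*j_0}(A^{*-j_0}\xi)\in A^{*j_0}(Supp(\sigma_V))\subseteq\widetilde G$. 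Thus $G\setminus\widetilde G$ is contained in the null set where (c) fails, giving $G=\widetilde G$ mod null sets and hence (a).

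\textbf{Main obstacle.} There is no serious analytic difficulty here once Theorem \ref{thm:characterization one} and the monotonicity of $\sigma_V(A^{*-j}\cdot)$ are in hand; the arguments are essentially measure-theoretic bookkeeping. The one point requiring care is the handling of null sets and of the pointwise-versus-a.e. nature of the monotonicity inequality $\sigma_V(A^{*-1}\xi)\geq\sigma_V(\xi)$: to run the iteration "$\sigma_V(A^{*-j}\xi)\geq\sigma_V(A^{*-j_0}\xi)$ for all $j\geq j_0$" for a.e. $\xi$ one should fix a single conull set on which all the countably many inequalities $\sigma_V(A^{*-(j+1)}\xi)\geq\sigma_V(A^{*-j}\xi)$, $j\in\mathbb{Z}$, hold simultaneously (possible since a countable union of null sets is null, using $A^*(\mathbb{Z}^d)\subseteq\mathbb{Z}^d$ so that the relevant pullbacks are measure-class preserving), and likewise intersect with the conull sets coming from (b) or (c). A second minor point is checking that the equality "$A^*$-invariant $G_1$, $G_2$ with $H^2_{G_1}=H^2_{G_2}$ implies $|G_1\triangle G_2|=0$", which follows at once from $\sigma_{H^2_{G}}=\chi_G$ in Proposition \ref{prop:properties spectral function}(iv). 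With these bookkeeping conventions fixed, the three implications are short.
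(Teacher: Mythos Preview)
Your proposal is correct and follows essentially the same approach as the paper: both rely on Theorem~\ref{thm:characterization one} to identify $\widetilde{G}=\bigcup_{j}A^{*j}(Supp(\sigma_V))$ and then use the monotonicity of $\{\sigma_V(A^{*-j}\xi)\}_j$ to pass between the conditions. The only cosmetic difference is that the paper proves (b)$\Rightarrow$(a), (c)$\Rightarrow$(a), and (a)$\Rightarrow$(b)$\wedge$(c) separately rather than your cycle (a)$\Rightarrow$(b)$\Rightarrow$(c)$\Rightarrow$(a), but the individual arguments are the same; your explicit attention to the null-set bookkeeping for the countably many a.e.\ inequalities is a welcome addition.
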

\begin{proof}
{First of all, by Theorem \ref{thm:characterization one} we have $\overline{\bigcup_{j\in\mathbb{Z}}\mathcal{D}_{A}^{j}V}=H^2_{\widetilde{G}}$ 
for the $A^*$-set
\begin{displaymath}
\widetilde{G}:=\bigcup_{j\in\mathbb{Z}}A^{*j}(Supp(\sigma_V))\,.
\end{displaymath}
As $Supp(\sigma_V)\subseteq G$ and $G$ is $A^*$-invariant, 
we clearly have $\widetilde{G}\subseteq G$.

If we start from (b), 
for almost every $\xi\in G$ there exists an integer $j_0$ such that $\sigma_V(A^{*-j}\xi)>0$ for $j\geq j_0$. 
Then $\xi=A^{*j_0}A^{*-j_0}\xi\in A^{*j_0}(Supp(\sigma_V))\subseteq\widetilde{G}$. 
This proves $G\subseteq\widetilde{G}$ and also (a).

Analogously, from (c) 
we know that for almost every $\xi\in G$ there exists an integer $j$ such that $\sigma_V(A^{*-j}\xi)>0$. 
Then $\xi=A^{*j}A^{*-j}\xi\in A^{*j}(Supp(\sigma_V))\subseteq\widetilde{G}$. 
This proves $G\subseteq\widetilde{G}$ and also (a).

Reciprocally, if we suppose (a) 
by Theorem \ref{thm:characterization one} we have \eqref{eq:G in terms of supports}. 
Thus, for almost every $\xi\in G$ there exist $\eta\in Supp(\sigma_V)$ and an integer $j_0$ such that $\xi=A^{*j_0}\eta$. 
From the comment above we deduce that $\sigma_V(A^{*-j}\xi)=\sigma_V(A^{*j_0-j}\eta)\geq\sigma_V(\eta)>0$ for all $j\geq j_0$, 
so $\lim_{j\to\infty}\sigma_V(A^{*-j}\xi)\geq\sigma_V(\eta)>0$. 
We have shown (b) and (c).}
\end{proof}

Note that condition $Supp(\sigma_V)\subseteq G$ is a very natural one. 
If we suppose $|Supp(\sigma_V)\setminus G|>0$ then the result is obviously false. 
The following characterization gives some additional information about the behaviour of the spectral function at the origin. 
In fact it expresses some kind of normalization at the origin.

\begin{theorem} \label{thm:characterization three}
Let $V\neq\{0\}$ be an $A$-refinable shift-invariant subspace and 
$G\subseteq\mathbb{R}^d$ an $A^*$-invariant set of positive measure such that $Supp(\sigma_V)\subseteq G$. 
Then the following conditions are equivalent:
\begin{description}
	\item[(a)] $\overline{\bigcup_{j\in\mathbb{Z}}\mathcal{D}_{A}^{j}V}=H^2_G\,$;
	\item[(b)] for any bounded $E\subseteq G$ of positive measure one has
	\begin{displaymath}
	\lim_{j\to\infty}\frac{1}{|A^{*-j}E|}\,\int_{A^{*-j}E}\sigma_V(\xi)\,d\xi=1\,.
	\end{displaymath}
\end{description}
\end{theorem}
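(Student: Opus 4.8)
The plan is to derive Theorem~\ref{thm:characterization three} from Theorem~\ref{thm:characterization two} by showing that condition (b) here is equivalent to condition (c) there, namely $\lim_{j\to\infty}\sigma_V(A^{*-j}\xi)>0$ for a.e.\ $\xi\in G$. The crucial preliminary observation, already recorded in the comment preceding Theorem~\ref{thm:characterization two}, is that the sequence $\{\sigma_V(A^{*-j}\xi)\}_{j\geq 1}$ is non-decreasing and bounded above by $1$ for a.e.\ $\xi$, hence pointwise convergent; denote its limit by $s(\xi):=\lim_{j\to\infty}\sigma_V(A^{*-j}\xi)$, so $0\leq s\leq 1$ a.e. By the monotone convergence theorem, for any bounded $E\subseteq G$ of positive measure,
\begin{displaymath}
\lim_{j\to\infty}\frac{1}{|A^{*-j}E|}\int_{A^{*-j}E}\sigma_V(\xi)\,d\xi
=\lim_{j\to\infty}\frac{1}{|E|}\int_{E}\sigma_V(A^{*-j}\eta)\,d\eta
=\frac{1}{|E|}\int_E s(\eta)\,d\eta\,,
\end{displaymath}
where the first equality is the change of variables $\xi=A^{*-j}\eta$ (so $|A^{*-j}E|=d_A^{-j}|E|$ cancels the Jacobian), and the second is monotone convergence applied to the increasing sequence $\sigma_V(A^{*-j}\cdot)\nearrow s$ on the finite-measure set $E$.

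Granting this identity, (b) says precisely that $\frac{1}{|E|}\int_E s=1$ for every bounded $E\subseteq G$ of positive measure. Since $s\leq 1$ a.e., this forces $s=1$ a.e.\ on every bounded subset of $G$, hence $s=1$ a.e.\ on $G$ (write $G$ as a countable union of bounded pieces $G\cap B_n$). In particular $s>0$ a.e.\ on $G$, which is condition (c) of Theorem~\ref{thm:characterization two}, so (b)$\Rightarrow$(a). Conversely, if (a) holds, then by Theorem~\ref{thm:characterization two} we get $s>0$ a.e.\ on $G$; but more is true — the last paragraph of the proof of Theorem~\ref{thm:characterization two} shows that for a.e.\ $\xi\in G$ one has $\xi=A^{*j_0}\eta$ with $\eta\in Supp(\sigma_V)$, and then $\sigma_V(A^{*-j}\xi)\geq\sigma_V(\eta)>0$ eventually. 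To upgrade $s>0$ to $s=1$ a.e.\ on $G$, I would argue as follows: $s$ is itself $A^*$-invariant in the sense that $s(A^{*-1}\xi)=s(\xi)$ a.e.\ (shifting the index of a convergent monotone sequence), and the set $\{s=1\}$ is $A^*$-invariant; moreover, using Theorem~\ref{thm:characterization Dutkay} (or rather its proof via the projections $P_j$) applied to the reducing space $H^2_G$ — equivalently, noting that completeness means $\sigma_V(A^{*-j}\xi)\to\sigma_{H^2_G}(\xi)=\chi_G(\xi)$, i.e.\ $\to 1$ for $\xi\in G$ — we directly obtain $s=\chi_G$ a.e. Then the displayed identity gives $\frac{1}{|E|}\int_E s=\frac{1}{|E|}\int_E 1=1$, which is (b).

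The main obstacle is the direction (a)$\Rightarrow$(b), specifically getting the \emph{normalized} limit $s=1$ rather than merely $s>0$. The cleanest route is to invoke Theorem~\ref{thm:characterization Dutkay} in its form adapted to $A$-reducing spaces: one picks a tight frame generator $\{\phi^\alpha\}$ of $V$, so $\sigma_V=\sum_\alpha|\widehat{\phi^\alpha}|^2$, and the completeness $\overline{\bigcup_j\mathcal{D}_A^jV}=H^2_G$ is equivalent to $\lim_{j\to\infty}\sum_\alpha|\widehat{\phi^\alpha}(A^{*-j}\xi)|^2=\chi_G(\xi)$ for a.e.\ $\xi$; restricting to $\xi\in G$ this is exactly $s(\xi)=1$ a.e.\ on $G$. (This adapted version of Theorem~\ref{thm:characterization Dutkay} is itself a routine consequence of Theorem~\ref{thm:characterization one}: one knows $\overline{\bigcup_j\mathcal{D}_A^jV}=H^2_{\widetilde G}$ with $\widetilde G=\bigcup_j A^{*j}Supp(\sigma_V)$, and the monotone limit $s$ vanishes off $\widetilde G$ and, by the argument in Theorem~\ref{thm:characterization two}, is positive on $\widetilde G$; so $\{s>0\}=\widetilde G$ up to null sets, and $\widetilde G=G$ iff (a) — what remains is to see the limit value is exactly $1$ and not just positive on $\widetilde G$, which follows since on $Supp(\sigma_V)$ itself the spectral values are already pushed up under iteration of $A^{*-1}$ and the reducing subspace generated is all of $H^2_{\widetilde G}$ whose spectral function is $\chi_{\widetilde G}$, forcing $s=\chi_{\widetilde G}$.) Once $s=\chi_G$ a.e.\ is in hand, the change of variables plus monotone convergence identity closes the argument in both directions.
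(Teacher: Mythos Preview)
Your reduction via the change of variables and monotone convergence to the identity
\[
\lim_{j\to\infty}\frac{1}{|A^{*-j}E|}\int_{A^{*-j}E}\sigma_V
=\frac{1}{|E|}\int_E s,\qquad s(\xi):=\lim_{j}\sigma_V(A^{*-j}\xi),
\]
is clean and correct, and it makes (b) equivalent to the single pointwise statement $s=1$ a.e.\ on $G$. Your direction (b)$\Rightarrow$(a) is fine: $s=1$ on $G$ certainly gives $s>0$ on $G$, which is condition (c) of Theorem~\ref{thm:characterization two}.

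The gap is in (a)$\Rightarrow$(b). You need $s=1$ a.e.\ on $G$, not merely $s>0$, and your justification of this is circular. You invoke ``Theorem~\ref{thm:characterization Dutkay} in its form adapted to $A$-reducing spaces,'' but that is precisely Theorem~\ref{thm:characterization five} of the paper, which is proved \emph{after} Theorem~\ref{thm:characterization three} and in fact relies on it (through Theorem~\ref{thm:characterization four}). Your parenthetical attempt at an independent argument ends with ``the reducing subspace generated is all of $H^2_{\widetilde G}$ whose spectral function is $\chi_{\widetilde G}$, forcing $s=\chi_{\widetilde G}$''; this is the assertion to be proved, not a proof of it. What you actually need is a continuity statement for spectral functions under increasing unions: if $V_j\nearrow W$ then $\sigma_{V_j}\nearrow\sigma_W$ a.e. This is true (via range functions, for instance), but it is not contained in Proposition~\ref{prop:properties spectral function} and you have not supplied an argument.

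The paper sidesteps this entirely. For (a)$\Rightarrow$(b) it takes $f$ with $\widehat f=\chi_E$, observes that for $j$ large enough $A^{*-j}E\subseteq[-\tfrac12,\tfrac12]^d$, and computes $\|P_jf\|^2$ directly by Plancherel and the tight-frame property to obtain $\|P_jf\|^2=d_A^{\,j}\int_{A^{*-j}E}\sigma_V$. Since $\|P_jf\|^2\to\|f\|^2=|E|$, (b) follows at once. For (b)$\Rightarrow$(a) the paper argues by contradiction from Theorem~\ref{thm:characterization one}: if $\widetilde G\subsetneq G$ then $E=(G\setminus\widetilde G)\cap B_r$ has positive measure and the averages in (b) vanish identically. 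Your monotone-convergence identity is an attractive alternative viewpoint, but to make the forward direction self-contained you must either prove the spectral-function continuity lemma or adopt the paper's projection computation.
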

\begin{proof}
{Suppose (a) and pick $E$ as in (b). 
Define $f\in H^2_G$ such that $\widehat{f}:={\text{\Large $\chi$}}_E$ and 
let $\mathcal{G}=\{\phi^{\alpha}\}_{\alpha\in I}$ be a tight frame generator of $V$. 
Since $A^{*}$ is an expansive dilation and $E$ is bounded, 
there exists a positive integer $l$ such that for all $j\geq l$ $A^{*-j}(E)\subseteq [-\frac{1}{2},\frac{1}{2}]^d$, 
and using the fact that $\{\phi_{(j,k)}^{\alpha}:=d_A^{\frac{j}{2}}\,\phi^{\alpha}(A^j\cdot-k)\::\:\alpha\in I,k\in\mathbb{Z}^d\}$ is a tight frame of $\mathcal{D}_{A}^{j}V$ 
by \eqref{eq:formula projections} we have
\begin{displaymath}
\left\|\,P_jf\,\right\|^{2}=
\sum_{\alpha\in I}\sum_{k\in\mathbb{Z}^d}|<f,\phi_{(j,k)}^{\alpha}>|^2=
\sum_{\alpha\in I}\sum_{k\in\mathbb{Z}^d}|<\widehat{f},\widehat{\phi_{(j,k)}^{\alpha}}>|^2=
\end{displaymath}
\begin{displaymath}
=\sum_{\alpha\in I}\sum_{k\in\mathbb{Z}^d}\left|d_A^{-\frac{j}{2}}\,\int_{\mathbb{R}^d}\widehat{f}(\xi)\,e^{2\pi ik\cdot A^{*-j}\xi}\,\overline{\widehat{\phi^{\alpha}}(A^{*-j}\xi)}\,d\xi\right|^2=
\end{displaymath}
\begin{displaymath}
=\sum_{\alpha\in I}\sum_{k\in\mathbb{Z}^d}\left|d_A^{\frac{j}{2}}\,\int_{\mathbb{R}^d}\widehat{f}(A^{*j}\eta)\,e^{2\pi ik\cdot\eta}\,\overline{\widehat{\phi^{\alpha}}(\eta)}\,d\eta\right|^2=
\end{displaymath}
\begin{displaymath}
=d_A^{j}\,\sum_{\alpha\in I}\sum_{k\in\mathbb{Z}^d}\left|\int_{A^{*-j}E}\overline{\widehat{\phi^{\alpha}}(\eta)}\,e^{2\pi ik\cdot\eta}\,d\eta\right|^2=
\end{displaymath}
\begin{displaymath}
=d_A^{j}\,\sum_{\alpha\in I}\int_{A^{*-j}E}|\widehat{\phi^{\alpha}}(\eta)|^2\,d\eta
=d_A^{j}\,\int_{A^{*-j}E}\sigma_V(\eta)\,d\eta\,.
\end{displaymath}
Since
\begin{displaymath}
\left\|\,P_jf\,\right\|^{2}\xrightarrow[j\to+\infty]{}\left\|\,f\,\right\|^{2}
=|E|\,,
\end{displaymath}
we have
\begin{displaymath}
\lim_{j\to\infty}\frac{1}{|A^{*-j}E|}\,\int_{A^{*-j}E}\sigma_V(\xi)\,d\xi=
\lim_{j\to\infty}\frac{d_A^j}{|E|}\,\int_{A^{*-j}E}\sigma_V(\xi)\,d\xi=1\,.
\end{displaymath}

To prove the reciprocal, we use Theorem \ref{thm:characterization one}. 
We obtain $\overline{\bigcup_{j\in\mathbb{Z}}\mathcal{D}_{A}^{j}V}=H^2_{\widetilde{G}}$, 
where $\widetilde{G}$ is the $A^*$-invariant set defined by \eqref{eq:G in terms of supports}. 
From our hypothesis we trivially deduce $\widetilde{G}\subseteq G$. 
We proceed by \emph{reductio ad absurdum} and we suppose $|G\setminus\widetilde{G}|>0$. 
Note that $G\setminus\widetilde{G}$ is also an $A^*$-invariant set, 
so we know that $|(G\setminus\widetilde{G})\cap B_r|>0$ for all $r>0$ 
(see \cite{KSa}). 
Since $V\subseteq H^2_{\widetilde{G}}$ we get $\sigma_V\leq{\text{\Large $\chi$}}_{\widetilde{G}}$. 
Applying (b) one has
\begin{displaymath}
1=\lim_{j\to\infty}\frac{1}{|A^{*-j}((G\setminus\widetilde{G})\cap B_r)|}\,\int_{A^{*-j}((G\setminus\widetilde{G})\cap B_r)}\sigma_V(\xi)\,d\xi\leq
\end{displaymath}
\begin{displaymath}
\leq\lim_{j\to\infty}\frac{|\widetilde{G}\cap(G\setminus\widetilde{G})\cap A^{*-j}B_r|}{|(G\setminus\widetilde{G})\cap A^{*-j}B_r|}=0\,.
\end{displaymath}
This proves $G=\widetilde{G}$ and (a).}
\end{proof}

Now we present another characterization using the above introduced notions.
\begin{theorem} \label{thm:characterization four}
Let $V\neq\{0\}$ be an $A$-refinable shift-invariant subspace and 
$G\subseteq\mathbb{R}^d$ an $A^*$-invariant set of positive measure such that  $Supp(\sigma_V)\subseteq G$. 
Then the following conditions are equivalent:
\begin{description}
	\item[(a)] $\overline{\bigcup_{j\in\mathbb{Z}}\mathcal{D}_{A}^{j}V}=H^2_G\,$;
	\item[(b)] $\sigma_V$ is $(G,A^*)$-locally nonzero at the origin;
	\item[(c)] the origin is a point of $(G,A^*)$-approximate continuity of $\sigma_V$ if we put $\sigma_V(0)=1$.
\end{description}
\end{theorem}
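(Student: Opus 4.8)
The plan is to establish the equivalences by chaining back to the already-proven Theorems \ref{thm:characterization two} and \ref{thm:characterization three}, together with the two lemmas on approximate continuity from Section \ref{sec:tools}. The cleanest route is the cycle (a)$\Longrightarrow$(c)$\Longrightarrow$(b)$\Longrightarrow$(a), using Lemma \ref{lemma:conditions for approximate continuity} to pass freely between the $(G,A^*)$-approximate continuity of $\sigma_V$ and the density-of-level-sets formulations, and using the monotonicity of the sequences $\{\sigma_V(A^{*-j}\xi)\}_j$ that was noted just before Theorem \ref{thm:characterization two}.

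For (a)$\Longrightarrow$(c): assume completeness. By Theorem \ref{thm:characterization two}(c) we have $\lim_{j\to\infty}\sigma_V(A^{*-j}\xi)>0$ for a.e.\ $\xi\in G$, and I claim in fact the limit equals $1$ a.e.\ on $G$. This is the key extra input: I would derive it from Theorem \ref{thm:characterization three}(b) by a Lebesgue-density/averaging argument. Indeed, fix $r>0$ and apply condition (b) of Theorem \ref{thm:characterization three} with $E=G\cap B_r$; this says the averages of $\sigma_V$ over $A^{*-j}(G\cap B_r)$ tend to $1$. Since $0\le\sigma_V\le 1$ a.e.\ and the sequence $\sigma_V(A^{*-j}\xi)$ is nondecreasing in $j$ for a.e.\ $\xi$, its pointwise limit $\ell(\xi):=\lim_j\sigma_V(A^{*-j}\xi)$ exists; rewriting the average over $A^{*-j}(G\cap B_r)$ as an average of $\sigma_V(A^{*-j}\cdot)$ over $G\cap B_r$ (change of variables, using $A^*$-invariance of $G$) and invoking monotone convergence, the limit of the averages equals the average of $\ell$ over $G\cap B_r$. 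So $\ell=1$ a.e.\ on $G\cap B_r$, hence on all of $G$. Now $\lim_j\sigma_V(A^{*-j}x)=1=\sigma_V(0)$ for a.e.\ $x\in G$, and the first half of Lemma \ref{lemma:sequences and approximate continuity} gives that the origin is a point of $(G,A^*)$-approximate continuity of $\sigma_V$ when we set $\sigma_V(0)=1$; this is (c).

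For (c)$\Longrightarrow$(b): with $\sigma_V(0)=1$, apply Lemma \ref{lemma:conditions for approximate continuity} (with $F=B_1$) in the form (i)$\Rightarrow$(iii), say with $\varepsilon=\tfrac12$: the density in $G\cap A^{*-j}B_1$ of the set where $|\sigma_V(\xi)-1|\ge\tfrac12$ tends to $0$. In particular the density of $\{\xi\in G\cap A^{*-j}B_1:\sigma_V(\xi)=0\}$ tends to $0$, which is exactly the definition of $\sigma_V$ being $(G,A^*)$-locally nonzero at the origin, i.e.\ the origin is a point of $(G,A^*)$-density for $Supp(\sigma_V)$; this is (b). Finally, for (b)$\Longrightarrow$(a): if the origin is a point of $(G,A^*)$-density for $Supp(\sigma_V)$, then by the second half of Lemma \ref{lemma:sequences and approximate continuity} (applied to $f=\text{\Large $\chi$}_{Supp(\sigma_V)}$, whose value at $0$ we take to be $1$, or directly to $\sigma_V$) there is an increasing sequence $\{j_k\}$ with $\sigma_V(A^{*-j_k}x)>0$ for a.e.\ $x\in G$; since the full sequence $\{\sigma_V(A^{*-j}\xi)\}_j$ is nondecreasing, this forces $\lim_{j\to\infty}\sigma_V(A^{*-j}\xi)>0$ for a.e.\ $\xi\in G$, which is condition (c) of Theorem \ref{thm:characterization two}, hence (a) holds.

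The main obstacle is the step (a)$\Longrightarrow$(c), specifically upgrading "the limit is positive" to "the limit is $1$": this is where the averaging identity of Theorem \ref{thm:characterization three} must be combined with monotone convergence and the change of variables $\xi\mapsto A^{*-j}\xi$, paying attention to the Jacobian factors $d_A^{j}$ and to the $A^*$-invariance of $G$ so that $A^{*-j}(G\cap B_r)=G\cap A^{*-j}B_r$ up to null sets. Everything else is a direct translation between the equivalent formulations in Lemma \ref{lemma:conditions for approximate continuity} and Lemma \ref{lemma:sequences and approximate continuity} and the already-established Theorems \ref{thm:characterization two} and \ref{thm:characterization three}.
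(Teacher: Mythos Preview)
Your proof is correct, but it takes a different route from the paper's. The paper proves the same cycle, (b)$\Rightarrow$(a), (a)$\Rightarrow$(c), (c)$\Rightarrow$(b), yet two of the implications are argued differently. For (a)$\Rightarrow$(c) the paper does \emph{not} first establish $\lim_j\sigma_V(A^{*-j}\xi)=1$ a.e.; instead it argues by contradiction directly at the level of density of level sets: if the density condition in Lemma~\ref{lemma:conditions for approximate continuity}(ii) failed for some $\varepsilon_0,r_0$ along a subsequence, the averages in Theorem~\ref{thm:characterization three}(b) over $A^{*-j_n}(B_{r_0}\cap G)$ would be bounded by $1-\varepsilon_0^2$, a contradiction. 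Your argument is cleaner: the change of variables plus monotone convergence immediately gives $\frac{1}{|G\cap B_r|}\int_{G\cap B_r}\ell=1$ with $\ell\le 1$, hence $\ell=1$ a.e., and then Lemma~\ref{lemma:sequences and approximate continuity} finishes; in effect you prove Theorem~\ref{thm:characterization five} along the way, whereas the paper derives Theorem~\ref{thm:characterization five} \emph{from} Theorem~\ref{thm:characterization four}.

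For (b)$\Rightarrow$(a) the paper gives a self-contained orthogonality argument: writing $\overline{\bigcup_j\mathcal D_A^jV}=H^2_{\widetilde G}$ via Theorem~\ref{thm:characterization one}, it shows that any $g\in H^2_{\mathbb{R}^d\setminus\widetilde G}$ must satisfy $\widehat g=0$ a.e.\ on $G$ because $\sigma_V(\xi)\,\widehat g(A^{*j}\xi)=0$ a.e., and then the $(G,A^*)$-locally nonzero hypothesis squeezes the density of $\{\widehat g\neq 0\}$ in $G\cap B_r$ below every $\varepsilon$. Your route via Lemma~\ref{lemma:sequences and approximate continuity} (applied to ${\text{\Large $\chi$}}_{Supp(\sigma_V)}$) plus monotonicity plus Theorem~\ref{thm:characterization two}(c) is equally valid; it trades the direct orthogonality computation for a reduction to the earlier characterization. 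One small wording issue: from Lemma~\ref{lemma:sequences and approximate continuity} you only get that for a.e.\ $x\in G$ one has $\sigma_V(A^{*-j_k}x)>0$ for all $k$ \emph{sufficiently large} (depending on $x$), not for all $k$; but that is of course enough for the monotonicity step.
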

\begin{proof}
{(b)$\Rightarrow$(a) 
By Theorem \ref{thm:characterization one} $\overline{\bigcup_{j\in\mathbb{Z}}\mathcal{D}_{A}^{j}V}=H^2_{\widetilde{G}}$ for some $A^*$-invariant set $\widetilde{G}$, 
and from our hypothesis we have $\widetilde{G}\subseteq G$. 
For any nonnegative integer $j$ $\sigma_{\mathcal{D}_{A}^{j}V}\leq{\text{\Large $\chi$}}_{\widetilde{G}}$, 
so $\sigma_{\mathcal{D}_{A}^{j}V}(\xi)\,\widehat{g}(\xi)=\sigma_{V}(A^{*-j}\xi)\,\widehat{g}(\xi)=0$ 
for any $g\in(H^2_{\widetilde{G}})^{\perp}=H^2_{\mathbb{R}^d\setminus\widetilde{G}}$ and almost every $\xi\in\mathbb{R}^d$, 
or equivalently
\begin{displaymath}
\sigma_{V}(\xi)\,\widehat{g}(A^{*j}\xi)=0
\quad a.e.\,\xi\in\mathbb{R}^d,\forall\:j\geq 0\,.
\end{displaymath}

Let $r,\varepsilon>0$. 
Since $\sigma_V$ is $(G,A^*)$-locally nonzero at the origin, 
there exists a positive integer $j$, dependent on $r$ and $\varepsilon$ such that
\begin{displaymath}
\frac{|\{x\in A^{*-j}B_r\cap G\::\:\sigma_V(x)=0\}|}{|A^{*-j}B_r\cap G|}
<\varepsilon\,.
\end{displaymath}
Then,
\begin{displaymath}
\frac{|\{y\in B_r\cap G\::\:\widehat{g}(y)\neq 0\}|}{|B_r\cap G|}=
\frac{|\{x\in A^{*-j}(B_r\cap G)\::\:\widehat{g}(A^{*j}x)\neq 0\}|}{|A^{*-j}(B_r\cap G)|}\leq
\end{displaymath}
\begin{displaymath}
\leq\frac{|\{x\in A^{*-j}(B_r\cap G)\::\:\sigma_V(x)=0\}|}{|A^{*-j}(B_r\cap G)|}
<\varepsilon\,.
\end{displaymath}
As $\varepsilon>0$ and $r>0$ are arbitrary, 
we get $H^2_{\mathbb{R}^d\setminus\widetilde{G}}\subseteq H^2_{\mathbb{R}^d\setminus G}$, 
so $G=\widetilde{G}$. 
This proves $(a)$.

(a)$\Rightarrow$(c) 
We will prove that for all $\varepsilon,r>0$ one has
\begin{displaymath}
\lim_{j\to\infty}\frac{|\{x\in A^{*-j}(B_r\cap G)\::\:1-\sigma_V(x)<\varepsilon\}|}{|A^{*-j}(B_r\cap G)|}=1\,.
\end{displaymath}
If this is not true, 
there exist $\varepsilon_0,0<\varepsilon_0<1$, $r_0>0$ and $\{j_n\}_{n=1}^{\infty},j_n\nearrow\infty$ 
such that for any positive integer $n$
\begin{displaymath}
|E_n|\geq\varepsilon_0\,|A^{*-j_n}(B_{r_0}\cap G)|,
\quad\mbox{where}\quad
E_n:=\{x\in A^{*-j_n}(B_{r_0}\cap G)\::\:1-\sigma_V(x)>\varepsilon_0\}\,.
\end{displaymath}
By Theorem \ref{thm:characterization three}
\begin{displaymath}
1=\lim_{n\to\infty}\frac{1}{|A^{*-j_n}(B_{r_0}\cap G)|}\,\int_{A^{*-j_n}(B_{r_0}\cap G)}\sigma_V(x)\,dx=
\end{displaymath}
\begin{displaymath}
=\lim_{n\to\infty}\frac{1}{|A^{*-j_n}(B_{r_0}\cap G)|}\,\left(\int_{A^{*-j_n}(B_{r_0}\cap G)\setminus E_n}\sigma_V(x)\,dx+\int_{E_n}\sigma_V(x)\,dx\right)<
\end{displaymath}
\begin{displaymath}
<\lim_{n\to\infty}\frac{|A^{*-j_n}(B_{r_0}\cap G)|-|E_n|+(1-\varepsilon_0)\,|E_n|}{|A^{*-j_n}(B_{r_0}\cap G)|}
=\lim_{n\to\infty}\frac{|A^{*-j_n}(B_{r_0}\cap G)|-\varepsilon_0\,|E_n|}{|A^{*-j_n}(B_{r_0}\cap G)|}\leq
\end{displaymath}
\begin{displaymath}
\leq\lim_{n\to\infty}\frac{(1-\varepsilon_0^2)\,|A^{*-j_n}(B_{r_0}\cap G)|}{|A^{*-j_n}(B_{r_0}\cap G)|}
=1-\varepsilon_0^2<1\,.
\end{displaymath}
By Lemma \ref{lemma:conditions for approximate continuity} we have (c).

(c)$\Rightarrow$(b) 
is easy to see, since we put $\sigma_V(0)=1\neq 0$.}
\end{proof}

The following characterization generalizes Theorem \ref{thm:characterization Dutkay}.
\begin{theorem} \label{thm:characterization five}
Let $V\neq\{0\}$ be an $A$-refinable shift-invariant subspace and 
$G\subseteq\mathbb{R}^d$ an $A^*$-invariant set of positive measure such that  $Supp(\sigma_V)\subseteq G$. 
Then the following conditions are equivalent:
\begin{description}
	\item[(a)] $\overline{\bigcup_{j\in\mathbb{Z}}\mathcal{D}_A^{j}V}=H^2_G\,$;
	\item[(b)] $\lim_{j\to\infty}\sigma_V(A^{*-j}\xi)=1$ for almost every $\xi\in G$.
\end{description}
\end{theorem}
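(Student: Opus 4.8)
The natural strategy is to prove the two implications separately, using the already-established characterizations rather than arguing from scratch. For (b)$\Rightarrow$(a): the hypothesis says $\sigma_V(A^{*-j}\xi)\to 1 > 0$ for a.e.\ $\xi\in G$, which is exactly condition (c) of Theorem \ref{thm:characterization two}; hence (a) follows immediately. So the only substantive content is the reverse implication.

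For (a)$\Rightarrow$(b): First recall from the comment preceding Theorem \ref{thm:characterization two} that the sequence $\{\sigma_V(A^{*-j}\xi)\}_j$ is non-decreasing and bounded above by $1$, hence convergent a.e.; call the limit $s(\xi)$. By Theorem \ref{thm:characterization two}, (a) gives $s(\xi) > 0$ for a.e.\ $\xi\in G$. The plan is to upgrade ``$s>0$ a.e.'' to ``$s=1$ a.e.'', and the key observation is that $s$ is essentially $A^*$-invariant: since $\sigma_V(A^{*-j}(A^{*}\xi)) = \sigma_V(A^{*-(j-1)}\xi)$, we get $s(A^{*}\xi) = s(\xi)$ for a.e.\ $\xi$. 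I would combine this with the integral condition: by Theorem \ref{thm:characterization three}, for any bounded $E\subseteq G$ of positive measure, $\frac{1}{|A^{*-j}E|}\int_{A^{*-j}E}\sigma_V \to 1$. Since $d_A^{j}|A^{*-j}E| = |E|$ and, by monotone convergence, $d_A^{j}\int_{A^{*-j}E}\sigma_V(\xi)\,d\xi = \int_E \sigma_V(A^{*-j}\eta)\,d\eta \to \int_E s(\eta)\,d\eta$ (the change of variables $\xi = A^{*-j}\eta$ together with $\sigma_V(A^{*-j}\eta)\nearrow s(\eta)$), we obtain $\frac{1}{|E|}\int_E s = 1$ for every bounded positive-measure $E\subseteq G$. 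Because $0\le s\le 1$ a.e., this forces $s=1$ a.e.\ on every bounded subset of $G$, hence a.e.\ on $G$.

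The step I expect to require the most care is the justified passage from $\frac{1}{|A^{*-j}E|}\int_{A^{*-j}E}\sigma_V\,d\xi$ to $\frac{1}{|E|}\int_E s\,d\eta$: one must perform the linear change of variables correctly (noting $|\det A^{*-j}| = d_A^{-j}$ since $|\det A^*| = |\det A| = d_A$) and then invoke monotone convergence, which is legitimate precisely because the monotonicity $\sigma_V(A^{*-1}\xi)\ge\sigma_V(\xi)$ gives an increasing integrand. Everything else is bookkeeping. As an alternative to the monotone-convergence route, one could instead combine Theorem \ref{thm:characterization four}(c) — the origin is a point of $(G,A^*)$-approximate continuity of $\sigma_V$ with the value $1$ assigned at $0$ — with Lemma \ref{lemma:sequences and approximate continuity}, which yields a subsequence $\{j_k\}$ with $\sigma_V(A^{*-j_k}\xi)\to 1$ a.e.\ on $G$; since the full sequence converges (monotonicity again), the limit along the subsequence is the limit of the whole sequence, so $s=1$ a.e.\ on $G$. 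I would present this second argument as the main one, since it reuses the previous theorems most economically, and mention that it closes the chain of equivalences (a)$\Leftrightarrow$(b).
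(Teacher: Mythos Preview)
Your proposal is correct, and the argument you single out as your ``main one'' --- Theorem~\ref{thm:characterization four}(c) together with Lemma~\ref{lemma:sequences and approximate continuity} to extract a subsequence converging to~$1$, then monotonicity to pass to the full sequence --- is exactly the paper's proof of (a)$\Rightarrow$(b). Your route for (b)$\Rightarrow$(a) via Theorem~\ref{thm:characterization two}(c) is slightly more direct than the paper's (which instead invokes the first half of Lemma~\ref{lemma:sequences and approximate continuity} and then Theorem~\ref{thm:characterization four}), and your alternative argument for (a)$\Rightarrow$(b) through Theorem~\ref{thm:characterization three} plus monotone convergence is a valid and pleasant variant that bypasses the approximate-continuity machinery entirely.
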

\begin{proof}
{We have already observed that the limit $\lim_{j\to\infty}\sigma_V(A^{*-j}\xi)$ exists for almost every $\xi\in\mathbb{R}^d$, and is nonnegative. 
Also, by Theorem \ref{thm:characterization four} 
condition (a) is equivalent to the fact that the origin is a point of $(G,A^*)$-approximate continuity of $\sigma_V$ if we put $\sigma_V(0)=1$.

If we suppose (a), 
by Lemma \ref{lemma:sequences and approximate continuity} there exists an increasing sequence of positive integers $\{j_k\}_{k=1}^{\infty}$ 
such that $\lim_{k\to\infty}\sigma_V(A^{*-j_k}\xi)=1$ for almost every $\xi\in G$. 
By using the monotonicity of the sequences $\{\sigma_V(A^{*-j}\xi)\}_{j=1}^{\infty}$ we arrive to (b). 
Reciprocally, by Lemma \ref{lemma:sequences and approximate continuity} 
(b) implies that the origin is a point of $(G,A^*)$-approximate continuity of $\sigma_V$ if we put $\sigma_V(0)=1$.}
\end{proof}

Obviously the given characterizations depend on the map $A$. 
Actually the $A$-refinability also do. 
In \cite{RSa} it is studied when a pair of self-adjoint expansive dilations give rise to equivalent conditions of $A$-approximate continuity.

\section{Wavelets} \label{sec:wavelets}

The generalized multiresolution analyses and multiwavelets are very close objects. 
We exploit this closeness to make some comments on the local behaviour of the Fourier transform of the multiwavelets. 
From Theorem \ref{thm:characterization four} we obtain the following result:
\begin{proposition} \label{prop:main consequence}
Let a shift-invariant subspace $W$ such that for some expansive dilation $A$ and $A^*$-invariant set $G$ we have
\begin{equation} \label{eq:orthogonal decomposition}
H^2_G=\bigoplus_{j\in\mathbb{Z}}\mathcal{D}_{A}^{j}W\,.
\end{equation}
Then the origin is a point of $A^*$-approximate continuity of $\sigma_W$ if we put $\sigma_W(0)=0$.
\end{proposition}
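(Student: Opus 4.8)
The plan is to build a core subspace $V$ out of the wavelet space $W$ so that the orthogonal decomposition \eqref{eq:orthogonal decomposition} becomes a completeness statement to which Theorem \ref{thm:characterization four} applies. Concretely, set
\begin{displaymath}
V:=\bigoplus_{j<0}\mathcal{D}_{A}^{j}W\,,
\end{displaymath}
that is, $V=\overline{\bigoplus_{j=1}^{\infty}\mathcal{D}_A^{-j}W}$. First I would check that $V$ is a shift-invariant subspace: each $\mathcal{D}_A^{-j}W$ is shift-invariant because $A(\mathbb{Z}^d)\subseteq\mathbb{Z}^d$ (hence so is $A^{-1}$ in the relevant sense used in the excerpt), and a countable orthogonal sum of shift-invariant subspaces is shift-invariant. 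Next I would verify that $V$ is $A$-refinable, i.e. $V\subseteq\mathcal{D}_AV$: applying $\mathcal{D}_A$ to $V$ shifts the index set $\{j<0\}$ to $\{j<1\}=\{j\le 0\}$, so $\mathcal{D}_AV=V\oplus W\supseteq V$. Then from \eqref{eq:orthogonal decomposition} one reads off
\begin{displaymath}
\overline{\bigcup_{k\in\mathbb{Z}}\mathcal{D}_A^{k}V}
=\overline{\bigoplus_{j\in\mathbb{Z}}\mathcal{D}_A^{j}W}
=H^2_G\,,
\end{displaymath}
since $\mathcal{D}_A^{k}V=\bigoplus_{j<k}\mathcal{D}_A^{j}W$ and the union over $k$ exhausts all integer indices.

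The second ingredient is the spectral function bookkeeping. By Proposition \ref{prop:properties spectral function}(ii) applied to the decomposition $\mathcal{D}_AV=V\oplus W$, we get $\sigma_{\mathcal{D}_AV}=\sigma_V+\sigma_W$, and by (vii) $\sigma_{\mathcal{D}_AV}(\xi)=\sigma_V(A^{*-1}\xi)$. Hence
\begin{displaymath}
\sigma_V(A^{*-1}\xi)=\sigma_V(\xi)+\sigma_W(\xi)\quad a.e.\,\xi\in\mathbb{R}^d\,,
\end{displaymath}
so $\sigma_W(\xi)=\sigma_V(A^{*-1}\xi)-\sigma_V(\xi)$. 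Also $V\subseteq H^2_G$ (each summand lies in $H^2_G$ since $\mathcal{D}_A$ and $\mathcal{D}_A^{-1}$ preserve $H^2_G$ because $A^*G=G$), so $Supp(\sigma_V)\subseteq G$ by Proposition \ref{prop:properties spectral function}(iv), and Theorem \ref{thm:characterization four} applies with this $V$ and $G$. Since $\overline{\bigcup_{k\in\mathbb{Z}}\mathcal{D}_A^{k}V}=H^2_G$ holds (condition (a)), condition (c) gives that the origin is a point of $(G,A^*)$-approximate continuity of $\sigma_V$ with the convention $\sigma_V(0)=1$.

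It remains to transfer approximate continuity from $\sigma_V$ to $\sigma_W$. Using Lemma \ref{lemma:sequences and approximate continuity}: the $(G,A^*)$-approximate continuity of $\sigma_V$ (value $1$ at the origin) yields an increasing sequence $\{j_k\}$ with $\sigma_V(A^{*-j_k}\xi)\to 1$ for a.e. $\xi\in G$; but the monotonicity noted in the excerpt (the sequences $\{\sigma_V(A^{*-j}\xi)\}_j$ are nondecreasing, being the spectral functions of the increasing chain $\mathcal{D}_A^{j}V$, and bounded by $1$) upgrades this to $\sigma_V(A^{*-j}\xi)\to 1$ for a.e. $\xi\in G$ — indeed this is exactly Theorem \ref{thm:characterization five}(b). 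Then for a.e. $\xi\in G$,
\begin{displaymath}
\sigma_W(A^{*-j}\xi)=\sigma_V(A^{*-(j-1)}\xi)-\sigma_V(A^{*-j}\xi)\xrightarrow[j\to\infty]{}1-1=0\,,
\end{displaymath}
and for a.e. $\xi\notin G$ one has $\sigma_W(\xi)=0$ anyway since $W\subseteq H^2_G$, so trivially $\sigma_W(A^{*-j}\xi)\to 0$ there too. Thus $\sigma_W(A^{*-j}x)\to 0=\sigma_W(0)$ for a.e. $x\in\mathbb{R}^d$, and the first half of Lemma \ref{lemma:sequences and approximate continuity} (with $G=\mathbb{R}^d$) gives that the origin is a point of $A^*$-approximate continuity of $\sigma_W$ with $\sigma_W(0)=0$, as claimed.

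The main obstacle I anticipate is not any single deep step but the bookkeeping that $V$ as defined is genuinely a closed shift-invariant $A$-refinable subspace with $\mathcal{D}_AV=V\oplus W$ and $\overline{\bigcup_k\mathcal{D}_A^kV}=H^2_G$; in particular one must be slightly careful that the infinite orthogonal sum $\bigoplus_{j<0}\mathcal{D}_A^jW$ is taken as a closed subspace and that $\mathcal{D}_A$ commutes with this closure, and that $V\neq\{0\}$ (if $W=\{0\}$ then $H^2_G=\{0\}$, $\sigma_W=0$ a.e., and the statement is vacuous, so we may assume $W\neq\{0\}$, whence $V\neq\{0\}$). Everything else is a direct application of Proposition \ref{prop:properties spectral function}, Theorem \ref{thm:characterization four}, Theorem \ref{thm:characterization five} and Lemma \ref{lemma:sequences and approximate continuity}.
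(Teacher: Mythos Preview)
Your overall strategy matches the paper's: define $V=\bigoplus_{j<0}\mathcal{D}_A^jW$, check it is an $A$-refinable shift-invariant subspace with $\mathcal{D}_AV=V\oplus W$ and completeness in $H^2_G$, apply Theorem~\ref{thm:characterization four}, and then pass from $\sigma_V$ to $\sigma_W$. However, there is a genuine error in your justification of shift-invariance. You claim each summand $\mathcal{D}_A^{-j}W$ ($j\ge 1$) is shift-invariant ``because $A(\mathbb{Z}^d)\subseteq\mathbb{Z}^d$ (hence so is $A^{-1}$ in the relevant sense)''; this is false. Since $(\mathcal{D}_A^{-1}f)(x-k)=d_A^{-1/2}f(A^{-1}x-A^{-1}k)$, translating $\mathcal{D}_A^{-1}f$ by $k\in\mathbb{Z}^d$ corresponds to translating $f$ by $A^{-1}k$, which in general is not in $\mathbb{Z}^d$ (indeed $A^{-1}\mathbb{Z}^d\not\subseteq\mathbb{Z}^d$ whenever $|\det A|>1$). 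So the individual negative dilates of $W$ need not be shift-invariant; only their sum is. The paper establishes this by observing that the orthogonal complement $V^\perp=H^2_{\mathbb{R}^d\setminus G}\oplus\bigoplus_{j\ge 0}\mathcal{D}_A^jW$ is shift-invariant (each $\mathcal{D}_A^jW$ with $j\ge 0$ \emph{is} shift-invariant, precisely by $A(\mathbb{Z}^d)\subseteq\mathbb{Z}^d$), whence so is $V$.

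Two minor remarks. Your displayed identity should read $\sigma_W(A^{*-j}\xi)=\sigma_V(A^{*-(j+1)}\xi)-\sigma_V(A^{*-j}\xi)$ (not $j-1$); the conclusion is unaffected since both terms tend to $1$. Your route from $(G,A^*)$- to $A^*$-approximate continuity of $\sigma_W$, via Theorem~\ref{thm:characterization five} and Lemma~\ref{lemma:sequences and approximate continuity}, is a valid alternative to the paper's, which argues directly through Lemma~\ref{lemma:conditions for approximate continuity} using $\sigma_W\le{\text{\Large $\chi$}}_G$ to pass from denominators $|G\cap A^{*-j}B_1|$ to $|A^{*-j}B_1|$.
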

\begin{proof}
{If we define
\begin{displaymath}
V=\bigoplus_{j<0}\mathcal{D}_{A}^{j}W
\end{displaymath}
we have that the subspace $V^{\perp}=H^2_{\mathbb{R}^d\setminus G}\oplus(\bigoplus_{j\geq 0}\mathcal{D}_{A}^{j}W)$ is shift-invariant, 
so $V$ is also. 
Clearly $Supp(\sigma_V)\subseteq G$ and $V$ is also $A$-refinable, 
since for any $l\in\mathbb{Z}$ $\mathcal{D}_{A}^{l}V=\bigoplus_{j<l}\mathcal{D}_{A}^{j}W$. 
We have $\mathcal{D}_{A}V=V\oplus W$, 
so by Proposition \ref{prop:properties spectral function}
\begin{equation} \label{eq:spectral function W}
\sigma_W(\xi)=
\sigma_{\mathcal{D}_{A}V}(\xi)-\sigma_V(\xi)=
\sigma_V(A^{*-1}\xi)-\sigma_V(\xi)\,.
\end{equation}

From \eqref{eq:orthogonal decomposition} we have the completeness property for $V$: 
$\overline{\bigcup_{j\in\mathbb{Z}}\mathcal{D}_{A}^{j}V}=H^2_G$. 
By Theorem \ref{thm:characterization four} 
the origin is a point of $(G,A^*)$-approximate continuity of $\sigma_V$ if we put  $\sigma_V(0)=1$, 
so by \eqref{eq:spectral function W} the origin is a point of $(G,A^*)$-approximate continuity of $\sigma_W$ if we put $\sigma_W(0)=0$. 
As $W\subseteq H^2_G$ we have by Proposition \ref{prop:properties spectral function} $\sigma_W\leq{\text{\Large $\chi$}}_G$. 
Thus for all $\varepsilon>0$
\begin{displaymath}
\frac{|\{\xi\in A^{*-j}B_1\::\:|\sigma_W(\xi)|\geq\varepsilon\}|}{|A^{*-j}B_1|}=
\frac{|\{\xi\in G\cap A^{*-j}B_1\::\:|\sigma_W(\xi)|\geq\varepsilon\}|}{|A^{*-j}B_1|}\leq
\end{displaymath}
\begin{displaymath}
\leq\frac{|\{\xi\in G\cap A^{*-j}B_1\::\:|\sigma_W(\xi)|\geq\varepsilon\}|}{|G\cap A^{*-j}B_1|}
\xrightarrow[j\to\infty]{}0
\end{displaymath}
By Lemma \ref{lemma:conditions for approximate continuity} 
the origin is a point of $A^*$-approximate continuity of the function $\sigma_W$ if we put $\sigma_W(0)=0$.}
\end{proof}

Given a finite system $\Psi=\{\psi^{\alpha}\}_{\alpha=1}^{N}$ in  $L^2(\mathbb{R}^{d})$, 
we define its associate affine system as
\begin{displaymath}
X(\Psi)=
\{\psi_{(j,k)}^{\alpha}=d_A^{\frac{j}{2}}\,\psi^{\alpha}(A^j\,\cdot-k)\::\:j\in\mathbb{Z},k\in\mathbb{Z}^d,1\leq\alpha\leq N\}\,,
\end{displaymath}
and $W(\Psi)=\overline{span}\{\psi_{(0,k)}^{\alpha}\::\:k\in\mathbb{Z}^d,1\leq\alpha\leq N\}$. 
The symbol $\overline{span}(\mathcal{F})$ denotes the closed linear span in $L^2(\mathbb{R}^{d})$ of the set $\mathcal{F}$. 
Clearly $W(\Psi)$ is shift-invariant and for any $j\in\mathbb{Z}$
\begin{displaymath}
\mathcal{D}_{A}^{j}W(\Psi)=
\overline{span}\{\psi_{(j,k)}^{\alpha}\::\:k\in\mathbb{Z}^d,1\leq\alpha\leq N\}\,.
\end{displaymath}
Fix an $A^*$-invariant set $G$. 
A system $\Psi$ is called a tight frame multiwavelet of $H^2_G$ 
if $X(\Psi)$ is a tight frame of $H^2_G$, that is
\begin{displaymath}
\sum_{\alpha=1}^{N}\sum_{j\in\mathbb{Z}}\sum_{k\in\mathbb{Z}^d}|<f,\psi_{(j,k)}^{\alpha}>|^2
=\left\|\,f\,\right\|^{2}
\quad\forall\:f\in H^2_G\,,
\end{displaymath}
and it is called semiorthogonal if additionally the subspaces $\mathcal{D}_{A}^{j}W(\Psi)$ are mutually orthogonal. 
Applying Proposition \ref{prop:main consequence} to $W(\Psi)$ we get the following:
\begin{theorem} \label{thm:wavelets origin}
If $\Psi$ is a semiorthogonal tight frame multiwavelet of $H^2_G$ 
then for any $1\leq\alpha\leq N$ 
the origin is a point of $A^*$-approximate continuity of $\widehat{\psi^{\alpha}}$ if we put $\widehat{\psi^{\alpha}}(0)=0$.
\end{theorem}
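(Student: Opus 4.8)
The plan is to derive Theorem~\ref{thm:wavelets origin} from Proposition~\ref{prop:main consequence} by passing from the shift-invariant subspace $W(\Psi)$ to the individual generators $\psi^{\alpha}$. First I would observe that, since $\Psi=\{\psi^{\alpha}\}_{\alpha=1}^{N}$ is a semiorthogonal tight frame multiwavelet of $H^2_G$, the subspaces $\mathcal{D}_{A}^{j}W(\Psi)$ are mutually orthogonal and their orthogonal sum is all of $H^2_G$; that is, $H^2_G=\bigoplus_{j\in\mathbb{Z}}\mathcal{D}_{A}^{j}W(\Psi)$, which is exactly hypothesis~\eqref{eq:orthogonal decomposition} with $W=W(\Psi)$. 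Hence Proposition~\ref{prop:main consequence} applies and gives that the origin is a point of $A^*$-approximate continuity of $\sigma_{W(\Psi)}$ if we put $\sigma_{W(\Psi)}(0)=0$.

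The second step is to relate $\sigma_{W(\Psi)}$ to the functions $|\widehat{\psi^{\alpha}}|^2$. Since $X(\Psi)$ is a tight frame of $H^2_G$ and the pieces $\mathcal{D}_{A}^{j}W(\Psi)$ are mutually orthogonal, the system $\{\psi_{(0,k)}^{\alpha}:k\in\mathbb{Z}^d,1\le\alpha\le N\}$ is a tight frame of $W(\Psi)$, so $\Psi$ is a tight frame generator of $W(\Psi)$. By the very definition of the spectral function this yields
\begin{displaymath}
\sigma_{W(\Psi)}(\xi)=\sum_{\alpha=1}^{N}|\widehat{\psi^{\alpha}}(\xi)|^2\quad a.e.\,\xi\in\mathbb{R}^d.
\end{displaymath}
In particular $|\widehat{\psi^{\alpha}}(\xi)|^2\le\sigma_{W(\Psi)}(\xi)$ a.e.\ for each fixed $\alpha$. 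Now I would fix $\alpha$ and transfer the approximate-continuity statement: by Lemma~\ref{lemma:conditions for approximate continuity}, the conclusion of Proposition~\ref{prop:main consequence} is equivalent to saying that for every $\varepsilon>0$
\begin{displaymath}
\lim_{j\to\infty}\frac{|\{\xi\in A^{*-j}B_1:\sigma_{W(\Psi)}(\xi)\ge\varepsilon\}|}{|A^{*-j}B_1|}=0.
\end{displaymath}
Since $\{\xi:|\widehat{\psi^{\alpha}}(\xi)|^2\ge\varepsilon\}\subseteq\{\xi:\sigma_{W(\Psi)}(\xi)\ge\varepsilon\}$, the corresponding density for $|\widehat{\psi^{\alpha}}|^2$ also tends to $0$, so again by Lemma~\ref{lemma:conditions for approximate continuity} the origin is a point of $A^*$-approximate continuity of $|\widehat{\psi^{\alpha}}|^2$ if we set its value at $0$ equal to $0$. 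Finally, since $|z|\ge\varepsilon\iff|z|^2\ge\varepsilon^2$, the same density estimate holds verbatim for $|\widehat{\psi^{\alpha}}|$ (with threshold $\varepsilon^2$ in place of $\varepsilon$), hence for $\widehat{\psi^{\alpha}}$ itself: the origin is a point of $A^*$-approximate continuity of $\widehat{\psi^{\alpha}}$ with $\widehat{\psi^{\alpha}}(0)=0$.

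The only delicate point is the bookkeeping in the first step: one must be careful that semiorthogonality plus the tight-frame property really do force $\{\psi_{(0,k)}^{\alpha}\}$ to be a tight frame of $W(\Psi)$ and not merely of some smaller subspace, and that $\bigoplus_{j}\mathcal{D}_{A}^{j}W(\Psi)$ exhausts $H^2_G$ rather than being a proper subspace; both follow from the frame identity applied to $f$ supported on each $\mathcal{D}_{A}^{j}W(\Psi)^{\perp}$-complement, together with the orthogonality of the dilated copies. Everything after that is a routine comparison of sublevel sets using Lemma~\ref{lemma:conditions for approximate continuity}, so I do not expect any real obstacle beyond making this reduction precise.
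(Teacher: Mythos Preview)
Your proposal is correct and follows essentially the same route as the paper: apply Proposition~\ref{prop:main consequence} to $W(\Psi)$, use that semiorthogonality makes $\Psi$ a tight frame generator of $W(\Psi)$ so that $\sigma_{W(\Psi)}=\sum_\alpha|\widehat{\psi^{\alpha}}|^2$, and then compare sublevel sets via Lemma~\ref{lemma:conditions for approximate continuity}. The paper just collapses your last two steps into one inequality by observing directly that $\{\xi:|\widehat{\psi^{\alpha}}(\xi)|\ge\varepsilon\}\subseteq\{\xi:\sigma_{W(\Psi)}(\xi)\ge\varepsilon^2\}$, but this is a cosmetic difference only.
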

\begin{proof}
{If $\Psi$ is a semiorthogonal tight frame multiwavelet of $H^2_G$ 
then $W(\Psi)$ satisfy the hypothesis of Proposition \ref{prop:main consequence}, 
so the origin is a point of $A^*$-approximate continuity of $\sigma_W$ if we put $\sigma_W(0)=0$. 
Since $\Psi$ is semiorthogonal it is clear that $\Psi$ is a tight frame generator of $W(\Psi)$, so
\begin{displaymath}
\sigma_{W(\Psi)}(\xi)=\sum_{\alpha=1}^N|\widehat{\psi^{\alpha}}(\xi)|^2\,.
\end{displaymath}
Then for any $1\leq\alpha\leq N$ and $\varepsilon>0$ we have
\begin{displaymath}
\frac{|\{\xi\in A^{*-j}B_1\::\:|\widehat{\psi^{\alpha}}(\xi)|\geq\varepsilon\}|}{|A^{*-j}B_1|}\leq
\frac{|\{\xi\in A^{*-j}B_1\::\:|\sigma_W(\xi)|\geq\varepsilon^2\}|}{|A^{*-j}B_1|}
\xrightarrow[j\to\infty]{}0\,,
\end{displaymath}
so by Lemma \ref{lemma:conditions for approximate continuity} 
the origin is a point of $A^*$-approximate continuity of $\widehat{\psi^{\alpha}}$ if we put $\widehat{\psi^{\alpha}}(0)=0$.}
\end{proof}

The system $\Psi$ is called an orthogonal multiwavelet of $H^2_G$ 
if $X(\Psi)$ is an orthonormal basis of $H^2_G$. 
Note that in this case actually $\Psi$ is a semiorthogonal tight frame multiwavelet, 
so the following result holds:
\begin{theorem}
If $\Psi$ is an orthogonal multiwavelet of $H^2_G$ 
then for any $1\leq\alpha\leq N$ 
the origin is a point of $A^*$-approximate continuity of $\widehat{\psi^{\alpha}}$ if we put $\widehat{\psi^{\alpha}}(0)=0$.
\end{theorem}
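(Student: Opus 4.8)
The plan is to reduce this statement directly to Theorem \ref{thm:wavelets origin}, which has already been proved. The key observation is that an orthogonal multiwavelet is a particular case of a semiorthogonal tight frame multiwavelet, so nothing new needs to be done about approximate continuity itself; the only task is to verify the two defining properties.

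First I would recall the definitions. If $X(\Psi)$ is an orthonormal basis of $H^2_G$, then in particular it is a tight frame of $H^2_G$ (with frame constant one), since every orthonormal basis is a tight frame; this gives the Parseval identity $\sum_{\alpha,j,k}|\langle f,\psi_{(j,k)}^{\alpha}\rangle|^2=\|f\|^2$ for all $f\in H^2_G$. So $\Psi$ is a tight frame multiwavelet of $H^2_G$. Second, I would check semiorthogonality, i.e.\ that the subspaces $\mathcal{D}_A^{j}W(\Psi)$ are mutually orthogonal. Since $X(\Psi)$ is an orthonormal \emph{basis}, the vectors $\psi_{(j,k)}^{\alpha}$ and $\psi_{(j',k')}^{\alpha'}$ are orthogonal whenever $(j,k,\alpha)\neq(j',k',\alpha')$; in particular, for $j\neq j'$, every generator of $\mathcal{D}_A^{j}W(\Psi)$ is orthogonal to every generator of $\mathcal{D}_A^{j'}W(\Psi)$. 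Taking closed linear spans, $\mathcal{D}_A^{j}W(\Psi)\perp\mathcal{D}_A^{j'}W(\Psi)$ for $j\neq j'$, which is exactly semiorthogonality.

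With these two facts in hand, $\Psi$ satisfies the hypotheses of Theorem \ref{thm:wavelets origin}, and I would simply invoke it: for each $1\leq\alpha\leq N$, the origin is a point of $A^*$-approximate continuity of $\widehat{\psi^{\alpha}}$ if we put $\widehat{\psi^{\alpha}}(0)=0$. This completes the argument.

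I do not anticipate any real obstacle here; the proof is essentially the remark already inserted before the statement (``Note that in this case actually $\Psi$ is a semiorthogonal tight frame multiwavelet''), and the only mild subtlety worth spelling out is why an orthonormal basis is automatically a tight frame of the \emph{same} space $H^2_G$ (completeness of the system in $H^2_G$ plus orthonormality give Parseval), and why mutual orthogonality of the generating vectors across different scales passes to the closed spans $\mathcal{D}_A^{j}W(\Psi)$. Both are standard Hilbert-space facts, so the proof will be a couple of lines.
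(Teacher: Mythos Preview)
Your proposal is correct and follows exactly the paper's approach: the paper simply remarks, immediately before the theorem, that an orthogonal multiwavelet is in particular a semiorthogonal tight frame multiwavelet, and then invokes Theorem~\ref{thm:wavelets origin}. You have merely spelled out in more detail the two routine verifications (orthonormal basis $\Rightarrow$ tight frame, and orthogonality of generators at different scales $\Rightarrow$ orthogonality of the closed spans $\mathcal{D}_A^{j}W(\Psi)$).
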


To the best of our knowledge 
this results are new even for orthonormal wavelets in the one dimensional case with dyadic dilations. 
Note that in \cite{BRS} was shown an example (example 5.6) 
of a dyadic wavelet with essentially unbounded wavelet dimension function whose associated core space $V$ can not be finitely generated. 
Preceding results on the so called cancellation or oscillation property of wavelets, 
namely $\int_{\mathbb{R}}\psi(x)\,dx=0$ (when $\psi\in L^1(\mathbb{R})$), 
can be found in \cite{Ba:89}, \cite{Me:90}, \cite{D:92}, \cite{HW:96}, \cite{W:97}. 
In \cite{BGRW} can be found also some results on the behaviour of the Fourier transform of wavelets at the origin in some cases.

Note that the last two theorems still hold for the case of wavelets of infinite order. 
That is, suppose that $\Psi=\{\psi^{\alpha}\}_{\alpha=1}^{\infty}$ is a sequence in $L^2(\mathbb{R}^{d})$ such that the system
\begin{displaymath}
X(\Psi)=
\{\psi_{(j,k)}^{\alpha}\::\:j\in\mathbb{Z},k\in\mathbb{Z}^d,\alpha=1,2,\ldots\}
\end{displaymath}
is a semiorthogonal tight frame of $H^2_G$. 
This means that we have
\begin{displaymath}
\sum_{\alpha=1}^{\infty}\sum_{j\in\mathbb{Z}}\sum_{k\in\mathbb{Z}^d}|<f,\psi_{(j,k)}^{\alpha}>|^2
=\left\|\,f\,\right\|^{2}
\quad\forall\:f\in H^2_G
\end{displaymath}
and $<\psi_{(j,k)}^{\alpha},\psi_{(l,m)}^{\beta}>=0$ whenever $j\neq l$. 
Then for any $\alpha=1,2,\ldots$ 
the origin is a point of $A^*$-approximate continuity of $\widehat{\psi^{\alpha}}$ if we put $\widehat{\psi^{\alpha}}(0)=0$.

\section{Conclusions}

Consider an expansive dilation $A$ and an $A$-reducing space $H^2_G$ induced by an $A^*$-invariant set of positive measure. 
In the following Theorem we summarize the results in section \ref{sec:completeness}. 
It provides several conditions on the local behaviour at the origin of the spectral function of an $A$-refinable shift-invariant subspace $V$ 
in order to satisfy the so called completeness property $\overline{\bigcup_{j\in\mathbb{Z}}\mathcal{D}_{A}^{j}V}=H^2_G$.

\begin{theorem} \label{thm:characterizations for Areducing}
Let $V\neq\{0\}$ be an $A$-refinable shift-invariant subspace and 
$G\subseteq\mathbb{R}^d$ an $A^*$-set of positive measure such that  $Supp(\sigma_V)\subseteq G$. 
Then the following conditions are equivalent:
\begin{description}
	\item[-] $\overline{\bigcup_{j\in\mathbb{Z}}\mathcal{D}_A^{j}V}=H^2_G\,$;
	\item[-] if $\mathcal{G}$ generates $V$ in the sense that the linear span of the shifts of $\mathcal{G}$ is dense in $V$, 
	then $\bigcup_{\phi\in\mathcal{G}}\bigcup_{j\in\mathbb{Z}}A^{*j}(Supp(\widehat{\phi}))=G$;
	\item[-] for any bounded $E\subseteq G$ of positive measure
	\begin{displaymath}
	\lim_{j\to\infty}\frac{1}{|A^{*-j}E|}\,\int_{A^{*-j}E}\sigma_V(\xi)\,d\xi=1\,;
	\end{displaymath}
	\item[-] the set $Supp(\sigma_V)$ is $A^{*-1}$-absorbing in $G$;
	\item[-] $\lim_{j\to\infty}\sigma_V(A^{*-j}\xi)>0$ for almost every $\xi\in G$;
	\item[-] $\lim_{j\to\infty}\sigma_V(A^{*-j}\xi)=1$ for almost every $\xi\in G$;
	\item[-] $\sigma_V$ is $(G,A^*)$-locally nonzero at the origin;
	\item[-] the origin is a point of $(G,A^*)$-approximate continuity of $\sigma_V$ if we put $\sigma_V(0)=1$.
\end{description}
\end{theorem}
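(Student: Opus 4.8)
The plan is to show that Theorem \ref{thm:characterizations for Areducing} is essentially a repackaging of the characterizations already established in Theorems \ref{thm:characterization one}--\ref{thm:characterization five}, together with the observation that the case $G=\mathbb{R}^d$ recovers Theorem \ref{thm:characterizations for L2}. So the proof will consist of citing each of those theorems in turn and checking that the list of conditions here matches, pointing out the one small gap that needs a separate argument.

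\begin{proof}
The equivalences among the eight listed conditions follow by assembling the results of Section \ref{sec:completeness}. Throughout, $G$ is $A^*$-invariant of positive measure with $Supp(\sigma_V)\subseteq G$, so all of Theorems \ref{thm:characterization two}, \ref{thm:characterization three}, \ref{thm:characterization four} and \ref{thm:characterization five} apply verbatim. Theorem \ref{thm:characterization two} gives the equivalence of completeness with ``$Supp(\sigma_V)$ is $A^{*-1}$-absorbing in $G$'' and with ``$\lim_{j\to\infty}\sigma_V(A^{*-j}\xi)>0$ a.e. on $G$''. Theorem \ref{thm:characterization three} gives the equivalence with the averaging condition $\lim_{j\to\infty}\frac{1}{|A^{*-j}E|}\int_{A^{*-j}E}\sigma_V(\xi)\,d\xi=1$ for bounded $E\subseteq G$ of positive measure. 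Theorem \ref{thm:characterization four} gives the equivalence with ``$\sigma_V$ is $(G,A^*)$-locally nonzero at the origin'' and with ``the origin is a point of $(G,A^*)$-approximate continuity of $\sigma_V$ if we put $\sigma_V(0)=1$''. Theorem \ref{thm:characterization five} gives the equivalence with ``$\lim_{j\to\infty}\sigma_V(A^{*-j}\xi)=1$ a.e. on $G$''.

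It remains to treat the second bulleted condition, concerning an arbitrary generating set $\mathcal{G}$ of $V$. By Proposition \ref{prop:properties spectral function}(vi), for any such $\mathcal{G}$ one has $\bigcup_{\phi\in\mathcal{G}}Supp(\widehat{\phi})=Supp(\sigma_V)$, hence $\bigcup_{\phi\in\mathcal{G}}\bigcup_{j\in\mathbb{Z}}A^{*j}(Supp(\widehat{\phi}))=\bigcup_{j\in\mathbb{Z}}A^{*j}(Supp(\sigma_V))$, which is precisely the set $\widetilde{G}$ produced by Theorem \ref{thm:characterization one}. Since $\overline{\bigcup_{j\in\mathbb{Z}}\mathcal{D}_A^{j}V}=H^2_{\widetilde{G}}$ and always $\widetilde{G}\subseteq G$ under our hypothesis, the completeness property $\overline{\bigcup_{j\in\mathbb{Z}}\mathcal{D}_A^{j}V}=H^2_G$ holds if and only if $\widetilde{G}=G$, i.e. if and only if $\bigcup_{\phi\in\mathcal{G}}\bigcup_{j\in\mathbb{Z}}A^{*j}(Supp(\widehat{\phi}))=G$. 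This closes the cycle and proves all eight conditions equivalent. The main point requiring care is not any single implication but the bookkeeping: one must verify that the hypothesis $Supp(\sigma_V)\subseteq G$ (rather than equality, or the weaker inclusion up to null sets) is exactly what is needed for each cited theorem, and that the formulations of $(G,A^*)$-density and $(G,A^*)$-approximate continuity agree with those in Section \ref{sec:tools}; both checks are immediate from the definitions. Finally, specializing $G=\mathbb{R}^d$ recovers Theorem \ref{thm:characterizations for L2}, as noted at the start of Section \ref{sec:completeness}.
\end{proof}
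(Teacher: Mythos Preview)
Your proposal is correct and matches the paper's approach exactly: Theorem \ref{thm:characterizations for Areducing} is presented in the Conclusions section as a summary of Theorems \ref{thm:characterization one}--\ref{thm:characterization five}, with no separate proof given, and you have simply made the bookkeeping explicit. Your handling of the second bulleted condition via Proposition \ref{prop:properties spectral function}(vi) and Theorem \ref{thm:characterization one} is the natural (and only) way to close that gap, and the observation that $H^2_{\widetilde{G}}=H^2_G$ forces $\widetilde{G}=G$ up to null sets is immediate.
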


We have also shown that these characterizations have consequences 
for the local behaviour at the origin of the Fourier transforms of wavelet functions. 
Concretely, in Theorem \ref{thm:wavelets origin} it has been proved that 
if $\Psi=\{\psi^{\alpha}\}_{\alpha=1}^{N}$ is a semiorthogonal tight frame multiwavelet of $H^2_G$, 
then the origin is a point of $A^*$-approximate continuity of  $\widehat{\psi^{\alpha}}$ for any $1\leq\alpha\leq N$, if we put  $\widehat{\psi^{\alpha}}(0)=0$. 
This generalizes previous results on the so called cancellation or oscillation property of wavelets (see section \ref{sec:wavelets}).

\section{Acknowledgments}

I would like to thank Professor Kazaros Kazarian for useful discussions and his patience, 
and the anonymous referee for their helpful comments, suggestions and references. 
Author's research was supported by Grants MTM2010-15790 and FPU of Universidad Aut\'onoma de Madrid (Spain).


\end{document}